\newcommand{\oprocendsymbol}{\hbox{$\bullet$}}
\newcommand{\oprocend}{\relax\ifmmode\else\unskip\hfill\fi\oprocendsymbol}
\newcommand{\longthmtitle}[1]{\mbox{}\textit{{(#1):}}}
\newcommand{\real}{{\mathbb{R}}}
\newcommand{\reals}{{\mathbb{R}}}
\newcommand{\eps}{\epsilon}
\newcommand{\blue}[1]{{\color{black}#1}}
\newcommand{\vect}[1]{\boldsymbol{\mathbf{#1}}}
\newcommand{\vectsf}[1]{\boldsymbol{\mathbf{\mathsf{#1}}}}
 \newcommand{\boxend}{\hfill \ensuremath{\Box}}
\newtheorem{assump}{Assumption}[section]
\newtheorem{thm}{Theorem}[section]
\newenvironment{proof}[1][Proof]{\noindent\textbf{#1.} }{\hfill $\square$}
\newtheorem{rem}{Remark}[section]
\newtheorem{lem}{Lemma}[section]
\begin{document}

\begin{frontmatter}
   \runtitle{}
  
   \title{Time-Varying Convex Optimization with O(n) Computational Complexity}
  \thanks[footnoteinfo]{A preliminary  version of this paper was presented in~\cite{MR-SSK:24}. 
This work was supported by NSF CAREER award ECCS 1653838. Corresponding author: M. Rostami. }
      \author[Paestum]{Mohammadreza Rostami}\ead{mrostam2@uci.edu}\quad
  \author[Paestum]{Solmaz S. Kia}\ead{solmaz@uci.edu}

  \address[Paestum]{Department of Mechanical and Aerospace
    Engineering, University of California, Irvine}

  \begin{keyword}
 time-varying optimization, convex optimization,  information streaming
  \end{keyword}

  \begin{abstract}
   In this article, we consider the problem of unconstrained time-varying convex optimization, where the cost function changes with time. We provide an in-depth technical analysis of the problem and argue why freezing the cost at each time step and taking finite steps toward the minimizer is not the best tracking solution for this problem. We propose a set of algorithms that by taking into account the temporal variation of the cost aim to reduce the tracking error of the time-varying minimizer of the problem. The main contribution of our work is that our proposed algorithms only require the first-order derivatives of the cost function with respect to the decision variable. This approach significantly reduces computational cost compared to the existing algorithms, which use the inverse of the Hessian of the cost. Specifically, the proposed algorithms reduce the computational cost from $O(n^3)$ to $O(n)$ per timestep, where $n$ is the size of the decision variable. Avoiding the inverse of the Hessian also makes our algorithms applicable to non-convex optimization problems. We refer to these algorithms as $O(n)$-algorithms. These $O(n)$-algorithms are designed to solve the problem for different scenarios based on the available temporal information about the cost. We illustrate our results through various examples, including the solution of a model predictive control problem framed as a convex optimization problem with a streaming time-varying cost function.
  \end{abstract}
\end{frontmatter}

\section{Introduction}
There is growing interest in optimization and learning problems where traditional algorithms cannot match the pace of streaming data due to limitations in computational resources~\cite{ MF-SP-AR:17, AS-AM-AK-GL-AR:16, ED-AS-SB-LM:19, AS-ED-SP-GL-GBG:20, esteki2022distributed, kim2024mathcal}. 
Applications like power grids, networked autonomous systems, real-time data processing, learning methods, and data-driven control systems benefit from adaptive optimization algorithms. This paper aims to advance knowledge on such algorithms for problems with time-varying cost functions. We focus on unconstrained convex optimization where the objective function is time-varying. Specifically, for a decision variable $\vect{x}\in\real^n$ and time $t\in\real_{\geq0}$, the objective function is $f(\vect{x},t):\real^n\times\real_{\geq 0}\rightarrow\real$ and the problem is \begin{align}\label{eq::opt} \vectsf{x}^{\star}(t) &= \arg\underset{{\vect{x}\in \reals^n}}{\min} \,\,f(\vect{x},t),\quad t\in\real_{\geq0}, \end{align} where $\vectsf{x}^\star(t)$ is the global minimizer at time $t$. In what follows, we denote $\vectsf{x}^\star(t)$ by $\vectsf{x}^\star_t$. Figure~\ref{fig:illustration_page1} illustrates a time-varying cost and its  $t\mapsto\vectsf{x}^\star_t$.  
For $t\mapsto\vectsf{x}^\star_t$ to be  a trajectory of a solution of problem~\eqref{eq::opt} is~\cite{SR-WR:17} \begin{align}\label{eq::KKT_timeVar} \nabla_{\vect{x}}f(\vectsf{x}_t^\star,t)=\vect{0},~~~\quad t\in\real_{\geq0}. \end{align}

 \begin{figure}[t]
  \centering
    \includegraphics[scale=0.09]{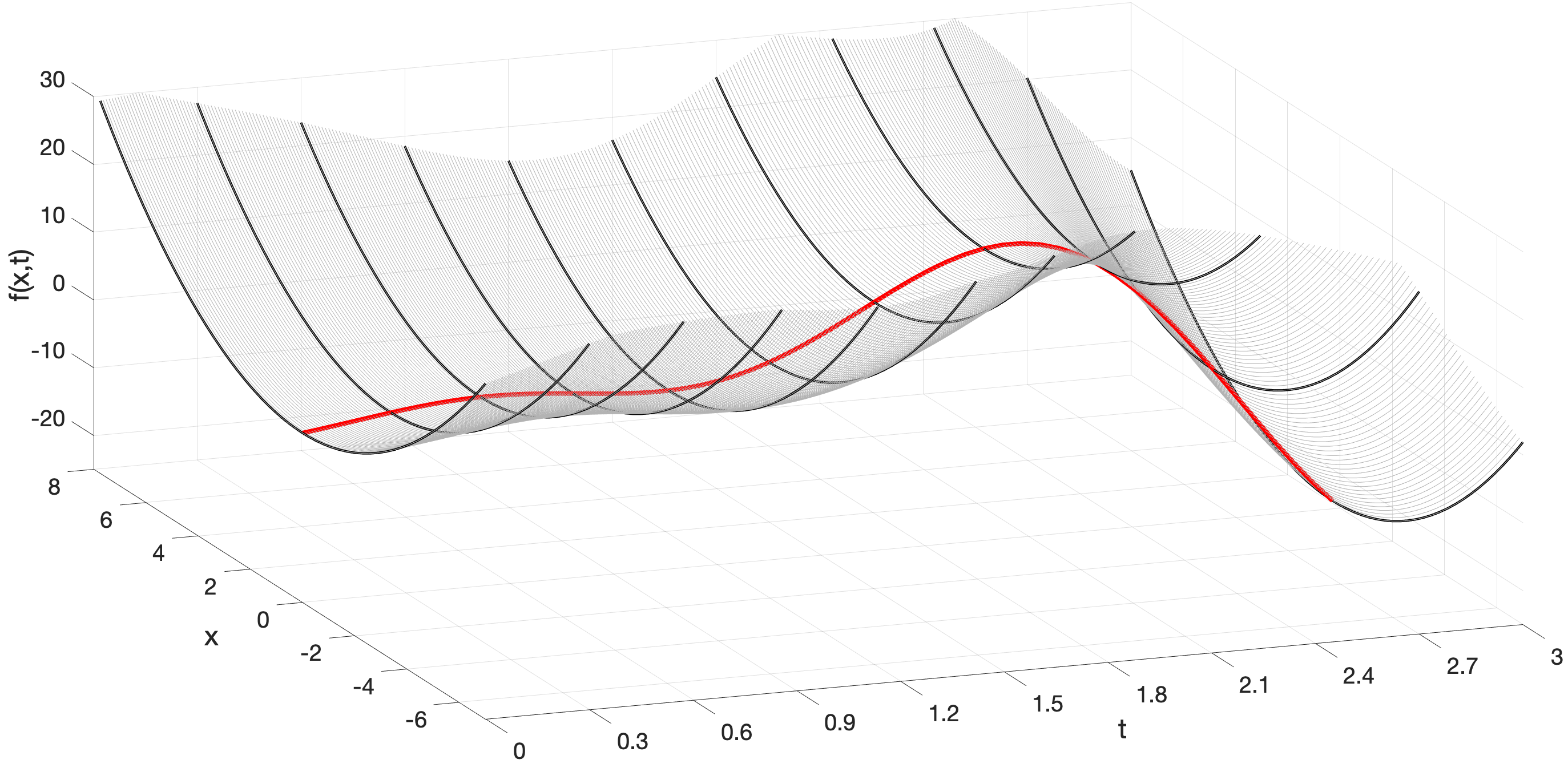}
      \caption{\small{A time-varying $f(\vect{x},t)$ vs. $\vect{x}$ and $t$ (gray plot) and the trajectory of $f(\vectsf{x}^\star_t,t)$ vs. $\vectsf{x}^\star(t)$ and $t$ (red curve). The darker contours are added to improve visualization of the function and does not signify any particular property of the function.}}\label{fig:illustration_page1}
\end{figure}

A simple way to solve~\eqref{eq::opt} is by sampling at specific times $t_k$ and solving a sequence of optimization problems assuming \blue{$f(\vect{x},t)=f(\vect{x},t_k)$} for each interval $t\in[t_k,t_{k+1})$. However, this leads to steady-state tracking errors, significant if the cost function varies quickly over time. To address the limitations of this approach, optimization algorithms specifically designed for time-varying optimization problems have been emerging recently.

\emph{Literature review}\\~\\
Algorithms for time-varying optimization take into account the temporal variation of the cost and aim to track the optimal trajectory $t\mapsto\vectsf{x}^\star_t$ asymptotically over time. 
 Some of these algorithms draw inspiration in their design and analysis from trajectory tracking control theory approaches. 
For example,~\cite{PCV-SJ-FB:22} and \cite{DHN-LVT-TK-SJJ:18} have  studied solving continuous-time time-varying convex optimization problems from the
\emph{contraction theory} perspective and has provided a tracking error bounds between any solution trajectory and the equilibrium trajectory. Recent work~\cite{AD-VC-AG-GR-FB:23} shows that for any contracting dynamics dependent on parameters, the tracking error is uniformly upper-bounded in terms of the contraction rate, the Lipschitz constant, and the rate of change of the parameter. The proposed algorithms in \cite{DHN-LVT-TK-SJJ:18,PCV-SJ-FB:22,AD-VC-AG-GR-FB:23} are continuous flows and their convergence analysis has been carried out in continuous~time. To guarantee contraction in the discrete-time domain, the discretization step size must be designed carefully based on the system's dynamics, which is not a trivial task.

For twice differentiable costs, derivative of~\eqref{eq::KKT_timeVar} leads to  
 \begin{align}\label{eq::x_star_trajectory}
 \dot{\vectsf{x}}_t^\star = -\nabla_{\vect{xx}} f(\vectsf{x}_t^\star,t) ^{-1}\nabla_{\vect{x}t} f(\vectsf{x}_t^\star,t),\quad t\in\real_{\geq 0},
 \end{align}
Thus, recent work such as~\cite{MF-SP-AR:17,SR-WR:17,ECH-RMW:15,YD-JL-MA:21,OR-MB:20,kim2024mathcal,marchi2024framework} propose continuous-time algorithms that converge asymptotically to trajectories satisfying~\eqref{eq::x_star_trajectory} from any initial conditions. Prediction-update approaches are proposed in~\cite{AS-AM-AK-GL-AR:16,ED-AS-SB-LM:19,NB-AS-RC:20,NB:21} for discrete-time implementations of these continuous-time algorithms, but these algorithms converge only to a neighborhood of the optimal trajectory. Despite promising performance, these existing algorithms use second-order derivatives of the cost (an $O(n^2)$ computation) and similar to~\eqref{eq::x_star_trajectory}, they also require the inverse of the Hessian (an $O(n^3)$ computation), adding to the computational cost. Therefore, although elegant, all these approaches suffer from high computational complexity. Computing the inverse of the Hessian also limits the use of these algorithms for non-convex optimization problems where the inverse may not exist. The use of the Hessian has also proven to be restrictive in designing distributed optimization algorithms inspired by these second-order methods. For example, the algorithms in~\cite{SR-WR:17} and~\cite{BH-YZ-ZM-WR:20} require that Hessians of all local objective functions be~identical.

Another body of literature relevant to time-varying optimization is \emph{online optimization}, as introduced by Zinkevich~\cite{MZ:03} and extensively studied~\cite{EH:16,KL-HX:23,PZ-YZ-LZ-ZZ:24, mokhtari2016online}. Although there may be some conceptual similarities between these two methods, key distinctions exist. In online optimization, a player iteratively makes decisions without knowing the outcomes in advance. Each decision results in a cost, which can vary and may depend on the actions taken by the player~\cite{EH:16}. In particular, online optimization involves an infinite sequence of convex functions $\{f_1,f_2,\cdots\}$, where at each time step $k\in\{1,\cdots,T\}$, an algorithm/player selects a vector $\vect{x}_k$ and then receives the corresponding cost function $f_k$. The cost sequence is not known in advance, and the cost functions are independent and uncorrelated. Rather than minimizing the current cost $f_k$, the focus is on minimizing regret, which is defined as the difference between the cumulative cost incurred by the chosen $\vect{x}_k$ over a time horizon $T$ and the cost that would have been incurred by the optimal sequence of decisions in hindsight. In contrast, time-varying optimization considers a cost function that evolves over time and assumes that this evolution is known or can be estimated. The goal is to track the time-varying $t\mapsto\vectsf{x}^\star_t$ by leveraging temporal information such as time derivatives of the cost function, including $\nabla_t f$, $\nabla_{tt} f$, and $\nabla_{xt} f$. This allows for efficient tracking and adaptation to changes in the cost~landscape.

\emph{Statement of contribution}\\~\\
We introduce novel discrete-time algorithms for solving optimization problem~\eqref{eq::opt}. Our contribution lies in using only first-order derivatives of the cost function with respect to $\vect{x}$, reducing computational cost from $O(n^3)$ to $O(n)$ per timestep compared to Hessian-based methods. These $O(n)$-algorithms are applicable to nonconvex problems and are designed for various scenarios based on available temporal cost information. Our algorithms have a prediction-update structure, where the goal is for the trajectory $t_k\mapsto\vect{x}_k$ to follow $t_k\mapsto\vectsf{x}^\star_k$ asymptotically over time. The prediction step uses the temporal information about the cost to transition $\vect{x}_k$ to a predicted $\vect{x}_{k+1}^-$ at time $t_{k+1}$ such that $f(\vect{x}_{k+1}^-,t_{k+1})\leq f(\vect{x}_{k},t_{k+1})$. Then, in the update step, a gradient descent step is taken in the time step $t_{k+1}$ to generate $\vect{x}_{k+1}$. The rigorous analysis shows that, similar to the second-order discrete-time algorithm, our proposed algorithms achieve tracking with a nonzero steady-state error. Our results also include a hybrid algorithm, which employs a second-order prediction step whenever a first-order prediction is skipped due to the internal conditions of the algorithm to avoid numerical issues. We demonstrate our results through two examples, including the solution of a Model Predictive Control (MPC) problem framed as a convex optimization problem with a streaming  time-varying cost function. To maintain the focus on the com-
position of the algorithms, we have moved the proofs of
our formal convergence analysis to the appendix.

\section{Preliminaries}\label{sec::prelim}
Our notation is defined as follows. The set of real numbers, nonnegative real and positive real numbers are, respectively, $\real$, $\real_{\geq0}$ and $\real_{>0}$. For a vector $\vect{x}\in\reals^n$, the Euclidean  norm is  $\|\vect{x}\|\!=\!\sqrt{\vect{x}^\top\vect{x}}$.  
The partial derivatives of a function $f(\vect{x},t):\real^n\times\real_{\geq 0}\rightarrow\real^n$ with respect to $\vect{x}\in\real^n$ and $t\in\real_{\geq 0}$ are given by $\nabla_{\vect{x}}f(\vect{x},t)$ (referred to hereafter as `gradient') and $\nabla_{t}f(\vect{x},t)$; the second order derivatives are denoted by $\nabla_{\vect{x}t}f(\vect{x},t)$,  $\nabla_{\vect{x}\vect{x}}f(\vect{x},t)$ and $\nabla_{tt}f(\vect{x},t)$. 

For a  twice differentiable function $f(\vect{\chi},\tau):\real^n\times\real_{\geq 0}\rightarrow\real^n$, Taylor series expansion~\cite{DPB:99} reads as
\begin{align}\label{eq::exact_taylor}
 &f(\vect{\chi}_{k+1},\tau_{k+1})= f(\vect{\chi}_{k},\tau_{k})\nabla_{\vect{\chi}} f(\vect{\chi}_{k},\tau_k)^\top(\vect{\chi}^-_{k+1}-\vect{\chi}_{k})\,+\nonumber\\&~~~~~+\nabla_{\tau} f(\vect{\chi}_{k},\tau_k)(\tau_{k+1}-\tau_k)+\frac{(\tau_{k+1}-\tau_k)^2}{2}\nabla_{\tau\tau} f(\vect{\zeta},\theta)\nonumber\\&~~~~~+\frac{1}{2}(\vect{\chi}^-_{k+1}-\vect{\chi}_{k})^\top \nabla_{\vect{\chi\chi}}f(\vect{\zeta},\theta)(\vect{\chi}^-_{k+1}-\vect{\chi}_{k})\nonumber\\&~~~~\quad+\nabla_{\vect{\chi}\tau} f(\vect{\zeta},\theta)^\top(\vect{\chi}^-_{k+1}-\vect{\chi}_{k})(\tau_{k+1}-\tau_k),
\end{align}  
where $\vect{\vect{\zeta}}\!\in\![\vect{\chi}_k,\vect{\chi}_{k+1})\subset\real^n$  and $\theta\!\in\![\tau_k,\tau_{k+1})\subset\real_{\geq0}$. 

Next, we present the assumptions that we will invoke in analysis of our proposed algorithms to solve optimization problem~\eqref{eq::opt} and discuss the corresponding properties that arise from these assumptions. We begin by specifying the conditions for the steaming cost to be well-posed.
\begin{assump}\longthmtitle{Well-posedness conditions}\label{asm:well_pose}{\rm
    At any time $t\in\real_{\geq 0}$ and any finite $\vect{x}\in\real^n$, the cost function of~\eqref{eq::opt} satisfies  $|f(\vect{x},t)|<\infty$. Moreover, at any $t\in\real_{\geq 0}$, optimization problem~\eqref{eq::opt} is solvable and its minimum value is finite, i.e., $|f(\vectsf{x}^\star_t,t)|=\mathsf{f}_t^\star<\infty$.}\boxend
\end{assump}
Next, we present a set of assumptions that describe various regulatory conditions on the cost $f$.

\begin{assump}\longthmtitle{Strong convexity condition}\label{asm:str_convexity}
{\rm 
The cost function $f(\vect{x},t):\real^n\times\real_{\geq 0}\rightarrow\real$ is twice continuously differentiable with respect to $\vect{x}$, and is $m$-strongly convex in $\vect{x}$. That is, at each $t\in\real_{\geq0}$ and $
  \vect{x},\vect{z}\in \!\real^n,~\vect{x}\!\neq\!\vect{z}$,  $f$ satisfies 
\begin{equation*}
  m\|\vect{z}-\vect{x}\|^2 \leq (\vect{z}-\vect{x})^\top(
  \nabla_{\vect{x}} f(\vect{z},t)-\nabla_{\vect{x}} f(\vect{x},t))\qquad \boxend.
\end{equation*}
} 
\end{assump}

\begin{assump}\sloppy[Global Lipschitz gradient  ]\label{asm:M_Lip}
{\rm 
The cost function $f(\vect{x},t):\real^n\times\real_{\geq 0}\rightarrow\real$ is
is twice differentiable with respect to $\vect{x}$, and has a \emph{$M$-Lipschitz} gradient. That is, for each  $M$-strongly convex in $\vect{x}$. That is, at each $t\in\real_{\geq0}$ and for any $
  \vect{x},\vect{z}\in \!\real^n,~\vect{x}\!\neq\!\vect{z}$,  $\nabla_{\vect{x}}f$ satisfies 
\begin{equation*}
 (\vect{z}-\vect{x})\!^\top\!(
  \nabla_{\vect{x}} f(\vect{z},t)-\nabla_{\vect{x}} f(\vect{x}),t)\!\leq
  M\|\vect{z}-\vect{x}\|^2.\qquad \boxend
\end{equation*}
 }
\end{assump}
In what follows we define
   \begin{align}
    f^\star(t)=f(\vectsf{x}^\star(t),t) \quad \vect{x}\in\real^n, ~t\in\real_{\geq 0}.
\end{align}
The optimality condition~\eqref{eq::KKT_timeVar} defines the solution of the optimization problem~\eqref{eq::opt}. In the case of differentiable costs, the optimal trajectory $t\mapsto\vectsf{x}_t^\star$ should satisfy the differential equation~\eqref{eq::x_star_trajectory}. Under Assumptions~~\ref{asm:str_convexity} and \ref{asm:M_Lip}, the trajectory $t\mapsto\vectsf{x}_t^\star$ given by~\eqref{eq::x_star_trajectory} with an initial value of $\vectsf{x}^\star(0)$ is unique~\cite{ALD-RTR:09}.

To ensure convergence of our proposed algorithms, we impose additional conditions on the cost function, as stated in Assumption~\ref{asm:bound_dfstar}. These conditions, along with Assumption~\ref{asm:str_convexity}, are similar to the assumptions made in existing second-order time-varying optimization algorithms.
\begin{assump}\longthmtitle{Smoothness of the cost function}\label{asm:bound_dfstar}
{\rm 
 The function  $f(\vect{x},t):\real^n\times\real_{\geq 0}\rightarrow\real$ is  continuously differentiable and sufficiently smooth. Specifically,
there exists a bound on the derivative of $f(\vect{x},t)$ as 
\begin{align*}
& |\nabla_{t}f(\vect{x},t)|\leq K_1,~\, \|\nabla_{\vect{x}t}f(\vect{x},t)\|\leq K_2,~\, |\nabla_{tt}f(\vect{x},t)|\leq\! K_3
\end{align*}
for any $\vect{x}\in\real^n$, $t\in\real_{\geq0}$. }\boxend
\end{assump}
According to \cite[Lemma 3.3]{HKK:02}, Assumption~\ref{asm:bound_dfstar} implies that the cost function $f(\vect{x},t)$ and its first derivatives $\nabla_{\vect{x}}f(\vect{x},t)$ and $\nabla_{t}f(\vect{x},t)$ are globally Lipschitz in $t\in\real_{\geq0}$ with constants $K_1$, $K_2$ and $K_3$, respectively.

\begin{lem}\longthmtitle{Bound on the  cost difference of the optimizer}\label{bound_traj_opt}{\rm
Consider the optimization problem~\eqref{eq::opt} under Assumptions~\ref{asm:str_convexity},~\ref{asm:M_Lip} and~\ref{asm:bound_dfstar}. Then,
\begin{align}\label{eq::bound_optimal_trajectory}
    |f^\star(t_{k+1})-f^\star(t_{k})|\leq \psi,
\end{align}
where $\psi=\delta (K_1 + \frac{\delta}{2}K_3) + \frac{K^2_2 \delta^2}{2m} (\frac{M\delta}{m} + 2)$,
with $\delta=t_{k+1}-t_k\in\real_{>0}$ being the sampling timestep of the optimal cost function across time.
}

\end{lem}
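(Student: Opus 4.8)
\emph{Proof strategy.} The plan is to expand the optimal cost at $t_{k+1}$ about the previous optimal point and let the stationarity conditions eliminate the first-order spatial term. Concretely, I would apply the exact second-order Taylor expansion~\eqref{eq::exact_taylor} to $f^\star(t_{k+1})=f(\vectsf{x}^\star_{k+1},t_{k+1})$ with the identifications $\vect{\chi}_k=\vectsf{x}^\star_k$, $\vect{\chi}^-_{k+1}=\vectsf{x}^\star_{k+1}$, $\tau_k=t_k$ and $\tau_{k+1}=t_{k+1}$, so that $f^\star(t_{k+1})-f^\star(t_k)$ is written as the sum of the spatial gradient term $\nabla_{\vect{x}}f(\vectsf{x}^\star_k,t_k)^\top(\vectsf{x}^\star_{k+1}-\vectsf{x}^\star_k)$, the pure-time terms $\nabla_t f(\vectsf{x}^\star_k,t_k)\delta+\tfrac{\delta^2}{2}\nabla_{tt}f(\vect{\zeta},\theta)$, the Hessian quadratic term $\tfrac{1}{2}(\vectsf{x}^\star_{k+1}-\vectsf{x}^\star_k)^\top\nabla_{\vect{xx}}f(\vect{\zeta},\theta)(\vectsf{x}^\star_{k+1}-\vectsf{x}^\star_k)$, and the mixed term $\nabla_{\vect{x}t}f(\vect{\zeta},\theta)^\top(\vectsf{x}^\star_{k+1}-\vectsf{x}^\star_k)\delta$. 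The first term vanishes, since the optimality condition~\eqref{eq::KKT_timeVar} gives $\nabla_{\vect{x}}f(\vectsf{x}^\star_k,t_k)=\vect{0}$.

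Next I would bound the surviving terms by the triangle inequality. The pure-time terms are controlled directly by the smoothness bounds of Assumption~\ref{asm:bound_dfstar}, giving $|\nabla_t f(\vectsf{x}^\star_k,t_k)|\,\delta\le K_1\delta$ and $\tfrac{\delta^2}{2}|\nabla_{tt}f(\vect{\zeta},\theta)|\le\tfrac{\delta^2}{2}K_3$, which together yield the first group $\delta(K_1+\tfrac{\delta}{2}K_3)$ of $\psi$. The mixed term is bounded by $K_2\,\delta\,\|\vectsf{x}^\star_{k+1}-\vectsf{x}^\star_k\|$ using $\|\nabla_{\vect{x}t}f\|\le K_2$, and the Hessian term by $\tfrac{M}{2}\|\vectsf{x}^\star_{k+1}-\vectsf{x}^\star_k\|^2$ using the global Lipschitz-gradient bound of Assumption~\ref{asm:M_Lip}. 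The single missing ingredient is therefore a bound on the one-step displacement $\|\vectsf{x}^\star_{k+1}-\vectsf{x}^\star_k\|$ of the optimizer.

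The key step, which I expect to be the main obstacle, is establishing $\|\vectsf{x}^\star_{k+1}-\vectsf{x}^\star_k\|\le \tfrac{K_2\delta}{m}$ using only first-order information. I would obtain it from the two stationarity conditions $\nabla_{\vect{x}}f(\vectsf{x}^\star_k,t_k)=\vect{0}$ and $\nabla_{\vect{x}}f(\vectsf{x}^\star_{k+1},t_{k+1})=\vect{0}$. Applying the strong convexity inequality of Assumption~\ref{asm:str_convexity} at the fixed time $t_{k+1}$ to the pair $\vectsf{x}^\star_k,\vectsf{x}^\star_{k+1}$ gives $m\|\vectsf{x}^\star_{k+1}-\vectsf{x}^\star_k\|^2\le(\vectsf{x}^\star_{k+1}-\vectsf{x}^\star_k)^\top(\nabla_{\vect{x}}f(\vectsf{x}^\star_{k+1},t_{k+1})-\nabla_{\vect{x}}f(\vectsf{x}^\star_k,t_{k+1}))$; the first gradient is zero, and since $\nabla_{\vect{x}}f(\vectsf{x}^\star_k,t_k)=\vect{0}$ as well, the right-hand side becomes $-(\vectsf{x}^\star_{k+1}-\vectsf{x}^\star_k)^\top(\nabla_{\vect{x}}f(\vectsf{x}^\star_k,t_{k+1})-\nabla_{\vect{x}}f(\vectsf{x}^\star_k,t_k))$. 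Using Cauchy--Schwarz together with the $K_2$-Lipschitz-in-$t$ property of the gradient (the consequence of Assumption~\ref{asm:bound_dfstar} recorded via~\cite[Lemma 3.3]{HKK:02}) bounds this by $K_2\delta\,\|\vectsf{x}^\star_{k+1}-\vectsf{x}^\star_k\|$, and dividing by $\|\vectsf{x}^\star_{k+1}-\vectsf{x}^\star_k\|$ yields the displacement bound. Equivalently, one may integrate $\|\dot{\vectsf{x}}^\star_t\|\le K_2/m$, which follows from~\eqref{eq::x_star_trajectory} and $\|\nabla_{\vect{xx}}f^{-1}\|\le 1/m$.

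Finally, substituting $\|\vectsf{x}^\star_{k+1}-\vectsf{x}^\star_k\|\le \tfrac{K_2\delta}{m}$ into the mixed and Hessian bounds and collecting all contributions reproduces $\psi$: the mixed term produces the $\tfrac{K_2^2\delta^2}{m}$ piece, i.e.\ the summand contributing the $2$ inside the grouped factor $\tfrac{K_2^2\delta^2}{2m}(\cdots)$, while the Hessian term produces the $M$-dependent piece, and adding the pure-time group $\delta(K_1+\tfrac{\delta}{2}K_3)$ gives the stated bound. The only delicate bookkeeping is to ensure that the Taylor-remainder evaluation points $\vect{\zeta},\theta$ are absorbed by the \emph{global} forms of Assumptions~\ref{asm:M_Lip} and~\ref{asm:bound_dfstar}, which is exactly why those assumptions are stated globally rather than locally.
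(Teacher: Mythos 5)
Your proposal is correct and follows essentially the same route as the paper: expand $f^\star(t_{k+1})-f^\star(t_k)$ via the exact Taylor formula~\eqref{eq::exact_taylor} evaluated along the optimizer, kill the first-order spatial term with~\eqref{eq::KKT_timeVar}, and control the remaining terms using Assumptions~\ref{asm:M_Lip} and~\ref{asm:bound_dfstar} together with the one-step displacement bound $\|\vectsf{x}^\star_{k+1}-\vectsf{x}^\star_k\|\leq K_2\delta/m$. The one genuine difference is how that displacement bound is obtained: the paper simply cites \cite[Theorem 2F.10]{ALD-RTR:09}, whereas you derive it from scratch by applying the strong-convexity inequality at the frozen time $t_{k+1}$ to the pair of stationary points and invoking the $K_2$-Lipschitz-in-$t$ property of the gradient. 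Your derivation is correct and makes the lemma self-contained, at no cost in generality, which is arguably preferable. One bookkeeping remark: your Hessian term evaluates to $\frac{M}{2}\bigl(\frac{K_2\delta}{m}\bigr)^2=\frac{MK_2^2\delta^2}{2m^2}$, so collecting terms yields $\frac{K_2^2\delta^2}{2m}\bigl(\frac{M}{m}+2\bigr)$ rather than the stated $\frac{K_2^2\delta^2}{2m}\bigl(\frac{M\delta}{m}+2\bigr)$; the extra factor of $\delta$ in the paper's constant does not follow from this argument (nor from any obvious alternative bounding of the quadratic term), so the discrepancy appears to be a typo in the stated $\psi$ rather than a gap in your proof.
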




\section{Objective statement}\label{sec::Prob_formu}

Algorithms with a computational complexity of $O(n
)$ should use the first-order derivatives of the cost with respect to $\vect{x}$. For a first-order solver for problem~\eqref{eq::opt}, previous work such as~\cite{AYP:05} investigated the use of the conventional gradient descent~algorithm 
\begin{align}\label{eq::grad_desc}
    &\vect{x}_{k+1}=\vect{x}_k-\alpha\, \nabla_{\vect{x}} f(\vect{x}_k,t_{k+1}).
\end{align}
Considering the first-order approximation of the cost 
\begin{equation}
\begin{aligned}
f(\vect{x}_{k+1},t_{k+1})\approx& f(\vect{x}_{k},t_{k+1}) \nonumber \\
&+\nabla_{\vect{x}}f(\vect{x}_{k},t_{k+1})^\top(\vect{x}_{k+1}-\vect{x}_{k})\nonumber,
\end{aligned}
\end{equation}
the gradient decent algorithm~\eqref{eq::grad_desc} certainly results in a reduction of the function in each $t_{k+1}$, i.e., $f(\vect{x}_{k+1},t_{k+1})\leq  f(\vect{x}_{k},t_{k+1})$ in first order approximation sense. In contrast, the first-order approximation over time from $t_k$ to $t_{k+1}$, 
\begin{align}\label{eq::taylor_time} f(\vect{x}_{k},t_{k+1})\approx& f(\vect{x}_{k},t_{k})+\nabla_{t}f(\vect{x}_{k},t_k)\,(t_{k+1}-t_k), \end{align} 
shows that the gradient descent algorithm~\eqref{eq::grad_desc} does not take into account $\nabla_{t}f(\vect{x}_{k},t_k)\,(t_{k+1}-t_k)$ and the change in cost over time. Iterative optimization algorithms for unconstrained problems aim to achieve a successive descent objective. However, if $\nabla_{t}f(\vect{x}_{k},t_k)>0$, it follows from~\eqref{eq::taylor_time} that $f(\vect{x}_{k},t_{k+1})>f(\vect{x}_{k},t_{k})$. Consequently, the function reduction at $t_{k+1}$ after applying the gradient descent algorithm~\eqref{eq::grad_desc} is not the same as when $f(\vect{x}_{k},t_{k+1})\approx f(\vect{x}_{k},t_{k})$ (in the first-order sense). Therefore, the gradient descent algorithm is expected to yield poor tracking performance during time intervals when $\nabla_{t}f(\vect{x}(t),t)>0$.

Let us re-write the gradient decent  algorithm~\eqref{eq::grad_desc} as 
\begin{subequations}\label{eq::grad_desc_alter}
\begin{align}\label{eq::grad_desc_alt}
\textsf{Prediction}:~\vect{x}^-_{k+1}&=\vect{x}_{k},\\
\textsf{~~~~Update}:~ \vect{x}_{k+1}&=\vect{x}_{k+1}^--\alpha\, \nabla_{\vect{x}} f(\vect{x}_{k+1}^-,t_{k+1}),
\end{align}
\end{subequations}
where, we use $\vect{x}^-_{k+1}$ to represent the \emph{predicted} decision variable at time $t_{k+1}$ and $\vect{x}_{k+1}$ as the \emph{updated} decision variable at $t_{k+1}$. In a time-varying cost setting, the expectation is that, at each time $t_{k+1}$, the updated decision variable gets close to $\vectsf{x}^\star(t_{k+1})$ via through function descent. \blue{Our objective in this paper is to design a carefully crafted prediction step that accounts for cost variations over time using the information from $\nabla_t f$ or $\nabla_{\vect{x}t} f$ such that $f(\vect{x}_{k+1}^-,t_{k+1})\leq f(\vect{x}_{k},t_{k+1})$.This prediction step will maintain a computational cost at most in the same order of magnitude as $\nabla_{\vect{x}} f$, $O(n)$, used in the update step. By employing a prediction rule, we aim to provide a ``warm" start for the update step so that better tracking performance is achieved than with the gradient descent algorithm~\eqref{eq::grad_desc_alter}.}

\begin{algorithm}[t]
\caption{$O(n)$ algorithm with $\nabla_t f(\vect{x}_k,t_k)$} 
\label{Alg::1}
\begin{algorithmic}[1]
\State \textbf{Initialization:} $\vect{x}_0 \in \real^n$, $\eps, \delta \in \real_{>0}$, $f(\vect{x}_0,t_0)\in\real$
\Statex \!\!\!\!\!\!\!\!\underline{\textsf{Prediction}}
\If{$\|\nabla_{\vect{x}}f(\vect{x}_k,t_k)\|\geq \eps$}
    \State $\vect{x}^-_{k+1} = \vect{x}_{k} - \delta \frac{|\nabla_t f(\vect{x}_k,t_k)|}{\|\nabla_{\vect{x}}f(\vect{x}_k,t_k)\|^2} \nabla_{\vect{x}}f(\vect{x}_k,t_k)$
\Else
    \State $\vect{x}^-_{k+1} = \vect{x}_{k}$
\EndIf
\Statex \!\!\!\!\!\!\!\!\underline{\textsf{Update}} 
\State $\vect{x}_{k+1} = \vect{x}^{-}_{k+1} - \alpha \nabla_{\vect{x}} f(\vect{x}^{-}_{k+1},t_{k+1})$
\end{algorithmic}
\end{algorithm}

\section{$O(n)$ algorithms with first-order derivatives}\label{first_order}
In this section, we introduce a set of algorithms to solve the optimization problem~\eqref{eq::opt} using only the first-order derivatives of the cost with respect to the decision variable $\vect{x}$, resulting in $O(n)$ computational complexity. These algorithms incorporate prediction and update steps, leveraging temporal variations in the cost to enhance convergence performance.

\begin{figure}[t]
  \centering
    \includegraphics[scale=0.29]{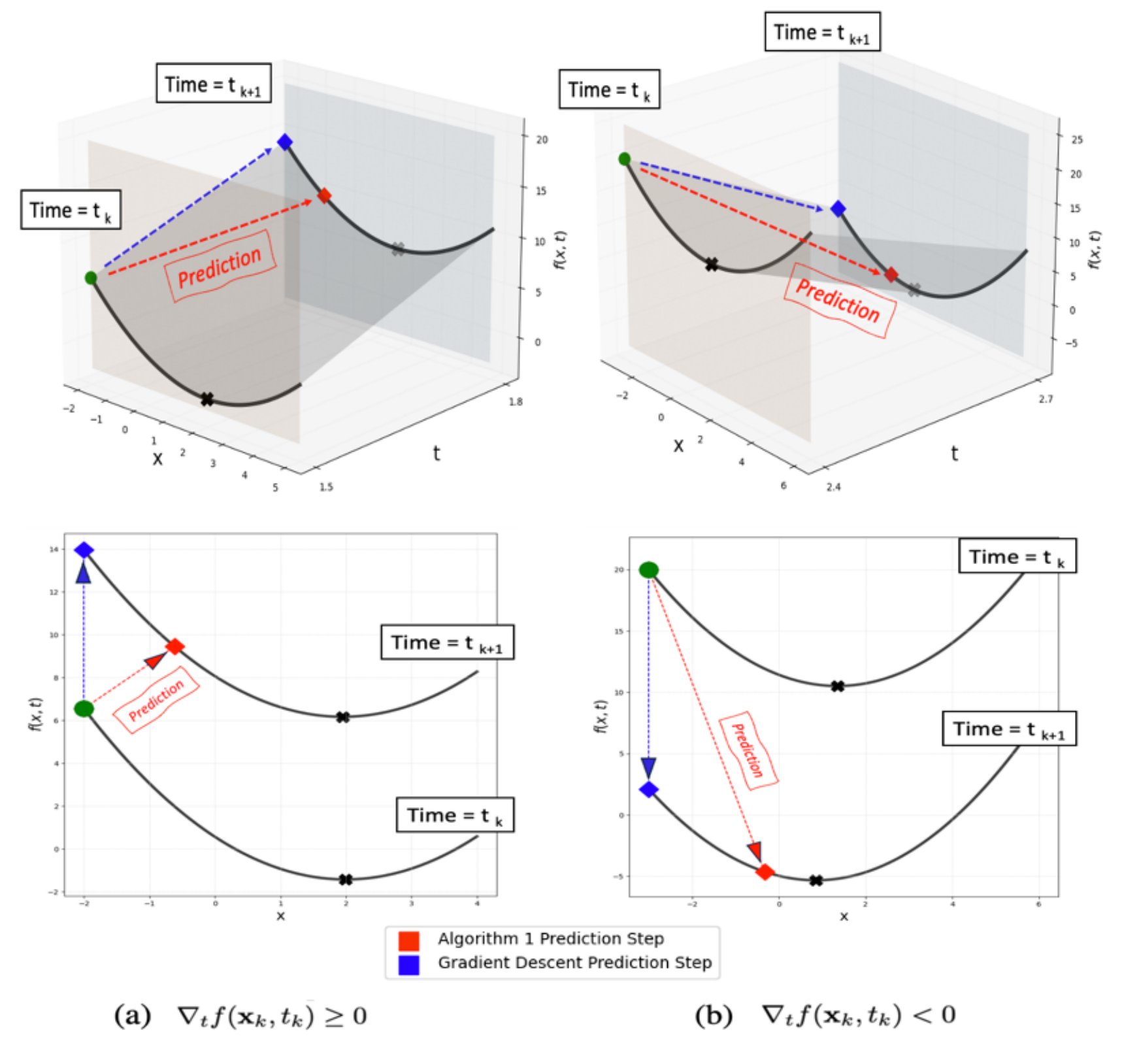}
      \caption{\small{An example case that demonstrates the role of the prediction step (line 3) of Algorithm \ref{Alg::1}. As we can see in this example, for both cases of $\nabla_t f(\vect{x}_k,t_k) \geq 0 $ (plots in the left column) and $\nabla_t f(\vect{x}_k,t_k) < 0 $ (plots in the right column)  $ f^-_1(t_{k+1})\leq f^-_{\text{g}}(t_{k+1})$.}}
      \label{fig:example-1}
\end{figure}
Our first proposed algorithm, Algorithm~\ref{Alg::1}, employs a prediction step (line 3) that involves adjusting the local state based on the rate of change of the cost function over time. The update step (line 7) is a gradient descent step at a fixed time $t_{k+1}$. \blue{In our preliminary work~\cite[Lemma IV.1]{MR-SSK:24}, for any function $f$ differentiable in $\vect{x}$ and $t$, we showed that the prediction step results in 
$ f^-_1(t_{k+1})\leq f^-_g(t_{k+1})$, where 
$f^-_g(t_{k+1})$ and $f^-_1(t_{k+1})$ are the function value computed in the first-order sense from~
\begin{align}\label{eq::first_order_approx}
f(\vect{x}^-_{k+1},t_{k+1})\approx &f(\vect{x}_k,t_{k})\,+\nabla_{\vect{x}} f(\vect{x}_k,t_k)(\vect{x}^-_{k+1}-\vect{x}_k)\nonumber\\
&+\nabla_t f(\vect{x}_k,t_k)(t_{k+1}-t_k).
\end{align}
where $\vect{x}^-_{k+1}$ generated respectively by the gradient descent algorithm~\eqref{eq::grad_desc_alter} and Algorithm~\ref{Alg::1}. This result is illustrated in Fig.~\ref{fig:example-1}.} A more accurate comparison between gradient descent and Algorithm \ref{Alg::1} is obtained when considering the ranges of $\delta$ for which the first-order approximation is the dominant factor in how the function changes across $\vect{x}$ and $t$. This comparison is further explained in the next result.

\begin{lem}\longthmtitle{Function reduction due to Algorithm~\ref{Alg::1}'s prediction step}\label{lemma::valid_delta}
{\rm Consider the gradient descent algorithm~\eqref{eq::grad_desc_alter} and Algorithm~\ref{Alg::1} \blue{under Assumptions~\ref{asm:M_Lip} and \ref{asm:bound_dfstar}}. Let $\delta=(t_{k+1}-t_k)$ be the same for both algorithms. Let $f^-_g(t_{k+1})=f(\vect{x}^-_{k+1},t_{k+1})$ and $f^-_1(t_{k+1})=f(\vect{x}^-_{k+1},t_{k+1})$ be, respectively, the function value of the gradient descent algorithm and Algorithm~\ref{Alg::1} after their respective prediction step. Suppose $\vect{x}_k$ for both algorithms has the same value. Then, for any $\eps\in\real_{\geq0}$, we have \mbox{$ f^-_1(t_{k+1})\leq f^-_g(t_{k+1})$} provided that \mbox{$0 < \delta \leq \frac{2\epsilon^2}{K_1 M + 2\epsilon\,K_2}$}.}
\end{lem}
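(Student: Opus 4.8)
The plan is to exploit that both candidate values are evaluated at the \emph{same} time $t_{k+1}$: since the gradient-descent prediction leaves the iterate unchanged, $f^-_g(t_{k+1}) = f(\vect{x}_k, t_{k+1})$, while $f^-_1(t_{k+1}) = f(\vect{x}^-_{k+1}, t_{k+1})$. Hence their difference is $f(\vect{x}^-_{k+1}, t_{k+1}) - f(\vect{x}_k, t_{k+1})$, which I can analyze by a second-order Taylor expansion in $\vect{x}$ at the frozen time $t_{k+1}$, with no time-increment terms. Writing $\vect{g} := \nabla_{\vect{x}}f(\vect{x}_k, t_k)$ and $h := \nabla_t f(\vect{x}_k, t_k)$, I first note that on the branch $\|\vect{g}\| < \eps$ Algorithm~\ref{Alg::1} reduces to the prediction in~\eqref{eq::grad_desc_alter}, so $f^-_1 = f^-_g$ and the claim is trivial; thus I restrict to $\|\vect{g}\| \geq \eps$, where the displacement is $\Delta\vect{x} := \vect{x}^-_{k+1} - \vect{x}_k = -\delta (|h|/\|\vect{g}\|^2)\,\vect{g}$, so that $\vect{g}^\top\Delta\vect{x} = -\delta|h|$ and $\|\Delta\vect{x}\| = \delta|h|/\|\vect{g}\|$.

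Second, I would expand with Lagrange remainder,
\begin{align*}
&f(\vect{x}^-_{k+1}, t_{k+1}) - f(\vect{x}_k, t_{k+1}) \\
&\quad = \nabla_{\vect{x}}f(\vect{x}_k, t_{k+1})^\top\Delta\vect{x} + \tfrac{1}{2}\Delta\vect{x}^\top\nabla_{\vect{xx}}f(\vect{\zeta}, t_{k+1})\Delta\vect{x},
\end{align*}
for some $\vect{\zeta}$ on the segment joining $\vect{x}_k$ and $\vect{x}^-_{k+1}$, and bound the quadratic term by $\tfrac{M}{2}\|\Delta\vect{x}\|^2 = \tfrac{M}{2}\delta^2 h^2/\|\vect{g}\|^2$ using that Assumption~\ref{asm:M_Lip} caps the top eigenvalue of the Hessian at $M$.

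The key step, and the main obstacle, is the linear term: the prediction uses the gradient at $t_k$, while the expansion naturally involves the gradient at $t_{k+1}$. I would split $\nabla_{\vect{x}}f(\vect{x}_k, t_{k+1}) = \vect{g} + [\nabla_{\vect{x}}f(\vect{x}_k, t_{k+1}) - \vect{g}]$. The first piece contracts to $\vect{g}^\top\Delta\vect{x} = -\delta|h|$ by construction of the prediction rule, while the second is controlled by Cauchy--Schwarz together with the $K_2$-Lipschitz-in-$t$ property of $\nabla_{\vect{x}}f$ (the remark following Assumption~\ref{asm:bound_dfstar}), giving $|[\nabla_{\vect{x}}f(\vect{x}_k, t_{k+1}) - \vect{g}]^\top\Delta\vect{x}| \leq K_2\delta\|\Delta\vect{x}\| = K_2\delta^2|h|/\|\vect{g}\|$.

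Finally I would assemble these bounds to obtain $f^-_1(t_{k+1}) - f^-_g(t_{k+1}) \leq \delta|h|\bigl(-1 + K_2\delta/\|\vect{g}\| + M\delta|h|/(2\|\vect{g}\|^2)\bigr)$ (the case $h=0$ being trivial, since then $\Delta\vect{x} = \vect{0}$). It then remains to force the bracket to be nonpositive. Using $\|\vect{g}\| \geq \eps$ in the denominators and $|h| \leq K_1$ from Assumption~\ref{asm:bound_dfstar} in the numerator, the bracket is at most $-1 + \delta(2\eps K_2 + K_1 M)/(2\eps^2)$, which is nonpositive precisely when $\delta \leq 2\eps^2/(K_1 M + 2\eps K_2)$, the stated bound. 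The only care needed is to check that these substitutions can only tighten the inequality, which holds because every factor multiplying $\delta$ is nonnegative.
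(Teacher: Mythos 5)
Your proof is correct, and it reaches the paper's exact final inequality by a somewhat different decomposition. The paper expands both $f^-_g(t_{k+1})$ and $f^-_1(t_{k+1})$ via the full bivariate Taylor expansion~\eqref{eq::exact_taylor} around $(\vect{x}_k,t_k)$, so that the common terms $f(\vect{x}_k,t_k)+\delta\nabla_t f+\tfrac{\delta^2}{2}\nabla_{tt}f(\vect{\zeta},\theta)$ cancel in the difference and the cross term $\nabla_{\vect{x}t}f(\vect{\zeta},\theta)^\top\Delta\vect{x}\,\delta$ appears explicitly and is bounded by $K_2\delta^2|h|/\eps$. You instead expand only in $\vect{x}$ at the frozen common time $t_{k+1}$, which eliminates all time-increment terms from the outset, and you recover the same $K_2$ contribution by writing $\nabla_{\vect{x}}f(\vect{x}_k,t_{k+1})=\vect{g}+[\nabla_{\vect{x}}f(\vect{x}_k,t_{k+1})-\vect{g}]$ and invoking the $K_2$-Lipschitz-in-$t$ property of the gradient (which the paper derives from Assumption~\ref{asm:bound_dfstar}). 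The three resulting terms, $-\delta|h|$, $K_2\delta^2|h|/\eps$, and $MK_1\delta^2|h|/(2\eps^2)$, are in one-to-one correspondence with the paper's, so the threshold on $\delta$ is identical. Your route is arguably slightly cleaner: the paper's cancellation implicitly treats the $\nabla_{tt}f(\vect{\zeta},\theta)$ Lagrange remainders of the two expansions as equal even though the intermediate points $(\vect{\zeta},\theta)$ differ between the two expansions, whereas your frozen-time expansion never produces such a term. You also correctly dispatch the $\|\vect{g}\|<\eps$ branch and the $h=0$ degenerate case, and your substitutions $\|\vect{g}\|\geq\eps$, $|h|\leq K_1$ are applied only to nonnegative coefficients of $\delta$, so the direction of the inequality is preserved. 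No gaps.
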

\begin{proof}
    Taylor series expansion of $f^-_\text{g}(\vect{x}_{k+1}^-,t_{k+1})$ according to~\eqref{eq::exact_taylor} and 
    substituting for $\vect{x}^{-}_{k+1}$ from the prediction step~\eqref{eq::grad_desc_alt} of the gradient descent algorithm gives
    \begin{equation}\label{eq:grad_des_pred}
      \begin{aligned}
\!\! \!f^-_\text{g}(t_{k+1})\!=\! f(\vect{x}_{k},t_{k})\! +\!  \delta\nabla_{t} f(\vect{x}_{k},t_k)\!+\!\frac{\delta^2}{2}\nabla_{tt}\! f(\vect{\vect{\zeta}},\theta).
\end{aligned}
    \end{equation}
    If $\|\nabla_{\vect{x}}f(\vect{x}_k,t_k)\|\leq\eps$, the prediction steps of Algorithm~\ref{Alg::1} and the gradient descent algorithm are the same, i.e., $f^-_1(t_{k+1})=f^-_g(t_{k+1})$. If $\|\nabla_{\vect{x}}f(\vect{x}_k,t_k)\|\geq\eps$, substituting the prediction step of Algorithm \ref{Alg::1}  (line $3$) in Taylor series expansion~\eqref{eq::exact_taylor} of $f^-_1(\vect{x}_{k+1}^-,t_{k+1})$ results in
\begin{align}\label{eq:f_pred}
 &f^-_1(t_{k+1})= \underbrace{f(\vect{x}_{k},t_{k})+\delta\,\nabla_t f(\vect{x}_k,t_k)+\frac{\delta^2}{2}\nabla_{tt} f(\vect{\vect{\zeta}},\theta)}_{f^-_\text{g}(t_{k+1})}-\nonumber\\
 &\delta|\nabla_t f(\vect{x}_k,t_k)|-\frac{\delta^2|\nabla_{t} f(\vect{x}_{k},t_k)|}{\| \nabla_{\vect{x}} f(\vect{x}_{k},t_k)\|^2}  \nabla_{\vect{x}t}f (\vect{\vect{\zeta}},\theta)^\top \nabla_{\vect{x}} f(\vect{x}_{k},t_k)\nonumber \\
 &+\frac{\delta^2\nabla_t f(\vect{x}_{k},t_k)^2}{2\|\nabla_{\vect{x}} f(\vect{x}_{k},t_k)\|^4}
\nabla_{\vect{x}} f(\vect{x}_{k},t_k)^\top \nabla_{\vect{xx}}f(\vect{\vect{\zeta}},\theta)\nabla_{\vect{x}} f(\vect{x}_{k},t_k).
\end{align}
Subsequently, $\frac{1}{\|\nabla_{\vect{x}}f(\vect{x}_k,t_k)\|}\leq\frac{1}{\eps}$ along with Assumptions~\ref{asm:M_Lip},~\ref{asm:bound_dfstar}   
lead to 
\begin{align*}
 &f^-_1(t_{k+1})- f^-_\text{g}(t_{k+1})\leq  -\delta\,|\nabla_t f(\vect{x}_k,t_k)|\,+\nonumber \\
 &\qquad\frac{\delta^2|\nabla_{t} f(\vect{x}_{k},t_k)| K_2}{\epsilon}+\frac{\delta^2 |\nabla_t f(\vect{x}_k,t_k)|K_1\,M}{2\eps^2}.
\end{align*}
Then, $f^-_1(t_{k+1}) -f^-_\text{g}(t_{k+1})\leq 0$ for the stepsize range given in the statement, completing the proof.
\end{proof}

\blue{The fact that the prediction step in Algorithm~\ref{Alg::1} results in $ f^-_1(t_{k+1})\leq f^-_{\text{g}}(t_{k+1})$ is of pivotal importance, as it demonstrates the impact of utilizing the temporal information $\nabla_t f$ of the cost function in achieving a better starting point in the next time step for the gradient descent update. During the update step, the algorithm takes a descent step towards the minimizer $\vectsf{x}^\star_{k+1}$. According to standard results for nonlinear optimization with a constant step-size, this will reduce the function value if $0<\alpha\leq \frac{1}{M}$~\cite{DPB:99}. The more update steps taken, the better the descent achieved. However, exact convergence to $\vectsf{x}^\star_{k+1}$ during the update step is asymptotic and cannot be accomplished in the finite time $t_{k+2}-t_{k+1}=\delta$, as we must transition to the cost at time $t_{k+2}$.

In the following, we examine the convergence behavior of Algorithm~\ref{Alg::1} over time, assuming only one update step is taken at each $t_{k+1}$. This analysis aims to demonstrate that under bounded temporal variations, we can expect convergence to a neighborhood of the optimal trajectory with some steady-state error. }
 In the following, we let $f_1(t_k)=f(\vect{x}_{k},t_k),$ where $\vect{x}_k$ is the updated decision variable of Algorithm~\ref{Alg::1} at timestep $k$. 

\begin{thm}\longthmtitle{Convergence guarantee of Algorithm~\ref{Alg::1}}\label{thm::main1}
{\rm 
Consider Algorithm~\ref{Alg::1} under Assumptions~\ref{asm:str_convexity},~\ref{asm:M_Lip} and \ref{asm:bound_dfstar}.  Then,  Algorithm~\ref{Alg::1} converges to the neighborhood of the optimum solution of~\eqref{eq::opt} as characterized by 
\begin{align}\label{eq::bound_on_solu_1}
f_1(t_{k+1})-f^{\star}(t_{k+1})\leq \,&(1-2\kappa\alpha m)^k\,(f_1(t_{0})-f^{\star}(t_{0}))\nonumber\\
&+\big(1-(1-2\kappa\alpha m)^k\big)\, E_1,
\end{align}
where $E_1=\frac{\psi}{4\kappa^2\alpha^2 m} +\frac{\max(\gamma\,\delta^2,\mu\,\delta)}{4\kappa^2\alpha^2 m^2}$, $\psi = \delta (K_1 + \frac{\delta}{2}K_3) + \frac{K^2_2 \delta^2}{2m} (\frac{M\delta}{m} + 2)$, $\delta=t_{k+1}-t_k$, $\gamma= \frac{2K_1}{\delta} + \frac{M}{2\epsilon^2}K_1^2+\frac{1}{2}K_3 + \frac{1}{\epsilon}K_1K_2$, $\mu=K_1+\frac{\delta}{2}K_3$ and $\kappa=(1-\alpha M/2)$, provided that $0<\alpha\leq \frac{1}{2M}$.
}
\end{thm}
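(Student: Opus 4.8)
The plan is to reduce the statement to a single linear recursion of the form $V_{k+1}\le (1-2\kappa\alpha m)\,V_k + b$ for the tracking error $V_k := f_1(t_k)-f^\star(t_k)\ge 0$, and then to recover~\eqref{eq::bound_on_solu_1} by unrolling the resulting geometric series; the quantity $E_1$ is precisely the fixed point of that recursion. First I would record that the stepsize restriction $0<\alpha\le\frac{1}{2M}$ guarantees $\kappa=1-\alpha M/2\in[\tfrac{3}{4},1)$ and $1-2\kappa\alpha m\in(0,1)$ (using $m\le M$), so the contraction is genuine and the geometric sum converges.

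Next I would analyze the update step. Since $\nabla_{\vect{x}}f(\cdot,t_{k+1})$ is $M$-Lipschitz (Assumption~\ref{asm:M_Lip}), the descent lemma applied to $\vect{x}_{k+1}=\vect{x}^-_{k+1}-\alpha\nabla_{\vect{x}}f(\vect{x}^-_{k+1},t_{k+1})$ gives $f_1(t_{k+1})\le f^-_1(t_{k+1})-\kappa\alpha\,\|\nabla_{\vect{x}}f(\vect{x}^-_{k+1},t_{k+1})\|^2$. Strong convexity (Assumption~\ref{asm:str_convexity}) yields the Polyak--\L{}ojasiewicz inequality $\|\nabla_{\vect{x}}f(\vect{x}^-_{k+1},t_{k+1})\|^2\ge 2m\,(f^-_1(t_{k+1})-f^\star(t_{k+1}))$, and substituting it turns the descent into the contraction $f_1(t_{k+1})-f^\star(t_{k+1})\le(1-2\kappa\alpha m)\,(f^-_1(t_{k+1})-f^\star(t_{k+1}))$. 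This is the source of the contraction factor in~\eqref{eq::bound_on_solu_1}.

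Then I would control the predicted value relative to the previous iterate through the decomposition $f^-_1(t_{k+1})-f^\star(t_{k+1})=V_k+\big(f^-_1(t_{k+1})-f(\vect{x}_k,t_k)\big)+\big(f^\star(t_k)-f^\star(t_{k+1})\big)$. The last bracket is bounded by $\psi$ via Lemma~\ref{bound_traj_opt}. For the middle bracket I would reuse the Taylor expansion~\eqref{eq:f_pred} already computed in the proof of Lemma~\ref{lemma::valid_delta}: bounding each term with Assumptions~\ref{asm:M_Lip} and~\ref{asm:bound_dfstar}, the active branch $\|\nabla_{\vect{x}}f(\vect{x}_k,t_k)\|\ge\eps$ produces $\gamma\delta^2$ after using $1/\|\nabla_{\vect{x}}f\|\le 1/\eps$ and $\nabla_{\vect{x}}f^\top\nabla_{\vect{xx}}f\,\nabla_{\vect{x}}f\le M\|\nabla_{\vect{x}}f\|^2$, while the inactive branch $\vect{x}^-_{k+1}=\vect{x}_k$ leaves only the temporal change $\delta\nabla_t f+\tfrac{\delta^2}{2}\nabla_{tt}f$, bounded by $\mu\delta$. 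Taking the worse of the two cases gives the uniform bound $\max(\gamma\delta^2,\mu\delta)$ on the middle bracket.

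Combining the three ingredients yields a one-step recursion $V_{k+1}\le(1-2\kappa\alpha m)\,V_k + b$ whose error $b$ is assembled from $\psi$ and $\max(\gamma\delta^2,\mu\delta)$; unrolling over $k$ and summing the geometric series with ratio $1-2\kappa\alpha m$ produces exactly~\eqref{eq::bound_on_solu_1}, with $E_1$ the steady-state level. I expect the main obstacle to be the constant bookkeeping that fixes the precise form of $E_1$ -- in particular the factor $(2\kappa\alpha m)^{-2}$ and the extra $m^{-1}$ attached to the $\max(\gamma\delta^2,\mu\delta)$ term but \emph{not} to $\psi$. This asymmetry is invisible in the crude additive decomposition above and almost certainly arises from passing the prediction-step cost change through the strong-convexity/PL estimate (which costs one power of $1/m$) before it is absorbed by the descent term, so the delicate part is to carry the gradient-norm and distance estimates through the combination consistently rather than bounding everything directly in function value.
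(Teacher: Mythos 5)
Your proposal is correct and follows essentially the same route as the paper's proof: bound the prediction-step change by $\max(\gamma\delta^2,\mu\delta)$ via the Taylor expansion~\eqref{eq:f_pred}, bound $f^\star(t_k)-f^\star(t_{k+1})$ by $\psi$ via Lemma~\ref{bound_traj_opt}, obtain the contraction factor $1-2\kappa\alpha m$ from the descent-plus-strong-convexity estimates~\eqref{eq::convex_lipsc}--\eqref{eq::dist_optim_cost} on the update step, and unroll the resulting linear recursion. Your only speculative point is slightly off: the asymmetric powers of $m$ in $E_1$ do not come from routing the prediction error through the PL inequality (in the paper both $\psi$ and $\max(\gamma\delta^2,\mu\delta)$ enter the recursion identically, each just multiplied by $1-2\kappa\alpha m$); they arise from a loose final overestimation of the geometric-series limit $\frac{(1-2\kappa\alpha m)(\psi+\max(\gamma\delta^2,\mu\delta))}{2\kappa\alpha m}$, which your tighter fixed point in fact dominates from below.
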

See appendix for the proof of Theorem~\ref{thm::main1}.
\smallskip
\begin{rem}\longthmtitle{Ultimate tracking bound of Algorithm 1}\label{rem::Algorithm1_1}
{
\rm
The tracking bound of established for Algorithm~1 in~\eqref{eq::bound_on_solu_1} shows that as $k\to\infty$, $(1-2\kappa\alpha m)^k\to0$ \blue{with a geometric convergence rate  determined by $\kappa$}. Thus, the effect of initialization error $f_1(t_0)-f^{\star}(t_{0})$ vanishes with time. Moreover, the ultimate bound on $f_1(t_{k+1})-f^{\star}(t_{k+1})$ as $k\to\infty$ is $E_1=\frac{\psi}{4\kappa^2\alpha^2 m }+\frac{\max(\gamma\delta^2,\mu\delta)}{4\kappa^2\alpha^2 m^2}$. \blue{As expected, the ultimate tracking bound depends on the magnitude of temporal variations in the cost. If these variations are zero (i.e., $K_1=K_2=K_3=0$), the tracking error vanishes.} Note that the optimal value of $\eps$ corresponding to the lowest bound in~\eqref{eq::bound_on_solu_1} is obtained as the solution of  $\gamma\delta^2=\mu\delta$,
which can be calculated numerically.
}
\boxend
\end{rem}

\smallskip\begin{rem}\longthmtitle{Improved upper bounds for small $\delta$}
{\rm
Note that, the upper bound in Theorem \ref{thm::main1} is derived for any arbitrary $\delta$. Examining the proof of Theorem \ref{thm::main1}, we can see that the upper bound in Theorem \ref{thm::main1} can be written more precisely when we distinguish the function changes in time instances $\mathcal{T} = \left\{k \, \middle| \, \|\nabla_{\vect{x}} f(\vect{x}_k, t_k)\| \geq \epsilon \right\}$ that the prediction step is active and when this step is not active, arriving at   
\begin{align*}
&f_1(t_{k+1})-f^{\star}(t_{k+1})\leq \,(1-2\kappa\alpha m)^k\,(f_1(t_{0})-f^{\star}(t_{0}))\nonumber\\
&+ \big(1-(1-2\kappa\alpha m)^k\big) \frac{\psi}{4\kappa^2\alpha^2 m} \,+\nonumber \\
&\frac{1}{4\kappa^2\alpha^2 m}(\gamma \delta^2 \!\sum_{i \in \mathcal{T}} (1-2\kappa\alpha m)^{k-i} \!+ \!\mu \delta \!\sum_{i \notin \mathcal{T}} (1-2\kappa\alpha m)^{k-i}).
\end{align*}
Note that for small enough $\delta$, $\gamma \delta^2 \leq \mu \delta$. Consequently, the upper bound improves over the gradient descent algorithm by $\frac{(\mu \delta - \gamma \delta^2)}{4\kappa^2\alpha^2 m} \sum_{i \in \mathcal{T}} (1-2\kappa\alpha m)^{k-i}$ for small $\delta$, as expected from the result in Lemma \ref{lemma::valid_delta}.\boxend}
\end{rem}

\subsection{An $O(n)$ algorithm without explicit knowledge of variation of cost across time}
Algorithm~\ref{Alg::1} leverages explicit knowledge of the temporal variation in cost, $\nabla_{t}f(\vect{x}_{k},t_k)$. However, in some applications, especially where costs are derived from streaming data, this information may not be available despite temporal relationships in cost values over time. For such scenarios, we propose Algorithm~\ref{Alg::2}, which retains the prediction-correction structure of Algorithm~\ref{Alg::1} but uses an approximation for $\nabla_t f(\vect{x}_k$, which is 
\begin{align}
 \nabla_{t} f(\vect{x}_{k},t_k)\approx\frac{f(\vect{x}_{k},t_k)-f(\vect{x}_{k},t_{k-1})}{t_k-t_{k-1}}.
\end{align}
At the expense of higher computational and storage costs, higher-order differences can be used to construct a better approximation of $\nabla_t f$. Moreover, for continuous-time  algorithms, \cite{marchi2024framework} presents a continuous-time derivative estimation scheme based on ``dirty-derivatives".

\smallskip
\begin{algorithm}[t]
\caption{$O(n)$ algorithm with $\nabla_t f(\vect{x}_k,t_k)$ approximation}
\begin{algorithmic}[1]
\State \textbf{Initialization:} $\vect{x}_0\in\real^n$, $\eps\in\real_{>0}$, $f(\vect{x}_0,t_0)\in\real$
\Statex \!\!\!\!\!\!\!\!\underline{\textsf{Prediction}}
\If{$\|\nabla_{\vect{x}}f(\vect{x}_k,t_k)\|\geq \eps$}
    \State $\vect{x}^-_{k+1} = \vect{x}_{k} - \frac{|f(\vect{x}_k,t_{k}) - f(\vect{x}_k,t_{k-1})|}{\|\nabla_{\vect{x}}f(\vect{x}_k,t_k)\|^2} \nabla_{\vect{x}}f(\vect{x}_k,t_k)$
\Else
    \State $\vect{x}^-_{k+1} = \vect{x}_{k}$
\EndIf
\Statex \!\!\!\!\!\!\!\!\underline{\textsf{Update}}
\State $\vect{x}_{k+1} = \vect{x}^{-}_{k+1} - \alpha \nabla_{\vect{x}}f(\vect{x}^{-}_{k+1}, t_{k+1})$
\end{algorithmic}
\label{Alg::2}
\end{algorithm}

\begin{rem}\longthmtitle{Prediction step of Algorithm~\ref{Alg::2} results in better function reduction than the gradient descent algorithm}\label{delta:Algorithm2}
{
\rm
Starting at same $\vect{x}_k$, an argument similar to Lemma~\ref{lemma::valid_delta} can be made about $f^-_2(t_{k+1})\leq f^-_g(t_{k+1})$ provided $0 < \delta \leq  \frac{2\epsilon^2}{(K_1 + \frac{\delta}{2}K_3) M + 2\epsilon\,K_2}$. To establish this range for $\delta $ we used $\frac{|f(\vect{x}_{k},t_k)-f(\vect{x}_{k},t_{k-1})|}{t_k-t_{k-1}} \leq K_1 + \frac{\delta}{2}K_3$.\boxend
}
\end{rem}


\vspace{0.1in}

\begin{thm}
\longthmtitle{Convergence guarantee of Algorithm~\ref{Alg::2}}\label{thm::main2}
{\rm 
Consider Algorithm~\ref{Alg::1} under Assumptions~\ref{asm:str_convexity},~\ref{asm:M_Lip} and \ref{asm:bound_dfstar}. Then, the Algorithm~\ref{Alg::2} converges to the neighborhood of the optimum solution of~\eqref{eq::opt} as characterized by 
\begin{align}\label{eq::bound_on_solu_2}
f_2(t_{k+1})-f^{\star}(t_{k+1}) \leq\,&
(1-2\kappa\alpha m)^k\,(f_2(t_{0})-f^{\star}(t_{0}))\nonumber\\
&+\big(1-(1-2\kappa\alpha m)^k\big)\, E_2,
\end{align}
where $E_2=\frac{\psi}{4\kappa^2\alpha^2 m}+\frac{\max(\gamma'\delta^2,
\mu\delta)}{4\kappa^2\alpha^2 m^2}$, $\psi = \delta (K_1 + \frac{\delta}{2}K_3) + \frac{K^2_2 \delta^2}{2m} (\frac{M\delta}{m} + 2)$, $\gamma'=K_3 + \frac{2K_1}{\delta} + \frac{K_1^2 M}{\epsilon^2} + \frac{K_2(K_1 + \frac{\delta}{2} K_3)}{\epsilon} + \frac{\delta^2 K_3^2 M}{4\epsilon^2}$, 
$\mu=K_1+\frac{\delta}{2}K_3$ and 
$\kappa=(1-\alpha M/2)$,
provided that $0<\alpha\leq \frac{1}{2M}$.
}
\end{thm}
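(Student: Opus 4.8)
The plan is to mirror the proof of Theorem~\ref{thm::main1} for Algorithm~\ref{Alg::1}, since Algorithm~\ref{Alg::2} shares the identical prediction--update skeleton and differs only in that the exact temporal term $\delta\,|\nabla_t f(\vect{x}_k,t_k)|$ is replaced by the backward finite difference $|f(\vect{x}_k,t_k)-f(\vect{x}_k,t_{k-1})|$. I would first dispose of the update step. Because it is an ordinary gradient-descent step at the frozen time $t_{k+1}$ with $0<\alpha\le \frac{1}{2M}$, the descent lemma (from the $M$-Lipschitz gradient, Assumption~\ref{asm:M_Lip}) gives $f(\vect{x}_{k+1},t_{k+1})\le f(\vect{x}^-_{k+1},t_{k+1})-\alpha\kappa\|\nabla_{\vect{x}} f(\vect{x}^-_{k+1},t_{k+1})\|^2$ with $\kappa=1-\alpha M/2$, and the Polyak--\L{}ojasiewicz inequality $\|\nabla_{\vect{x}} f\|^2\ge 2m(f-f^\star)$ implied by $m$-strong convexity (Assumption~\ref{asm:str_convexity}) then yields the contraction $f(\vect{x}_{k+1},t_{k+1})-f^\star(t_{k+1})\le (1-2\kappa\alpha m)\big(f(\vect{x}^-_{k+1},t_{k+1})-f^\star(t_{k+1})\big)$, which produces the geometric factor $(1-2\kappa\alpha m)$ in~\eqref{eq::bound_on_solu_2}.

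Next I would bound the prediction step by the telescoping split $f(\vect{x}^-_{k+1},t_{k+1})-f^\star(t_{k+1})=\big[f(\vect{x}^-_{k+1},t_{k+1})-f(\vect{x}_k,t_k)\big]+\big[f_2(t_k)-f^\star(t_k)\big]+\big[f^\star(t_k)-f^\star(t_{k+1})\big]$, where the middle bracket is the recursive term and the last bracket is controlled by $\psi$ via Lemma~\ref{bound_traj_opt}. For the first bracket I would Taylor-expand $f(\vect{x}^-_{k+1},t_{k+1})$ about $(\vect{x}_k,t_k)$ using~\eqref{eq::exact_taylor} and substitute the prediction direction of line~3; since $\nabla_{\vect{x}} f(\vect{x}_k,t_k)^\top(\vect{x}^-_{k+1}-\vect{x}_k)=-|f(\vect{x}_k,t_k)-f(\vect{x}_k,t_{k-1})|$, the leading terms collapse to $-|f(\vect{x}_k,t_k)-f(\vect{x}_k,t_{k-1})|+\delta\,\nabla_t f(\vect{x}_k,t_k)$ plus the quadratic remainders in $\nabla_{\vect{xx}} f$, $\nabla_{\vect{x}t} f$ and $\nabla_{tt} f$, each bounded through Assumption~\ref{asm:bound_dfstar} and $\|\nabla_{\vect{x}} f(\vect{x}_k,t_k)\|^{-1}\le \epsilon^{-1}$ on the active set $\{k:\|\nabla_{\vect{x}} f(\vect{x}_k,t_k)\|\ge\epsilon\}$ (the inactive case $\vect{x}^-_{k+1}=\vect{x}_k$ contributes only $f(\vect{x}_k,t_{k+1})-f(\vect{x}_k,t_k)\le\mu\delta$, matching $\mu=K_1+\frac{\delta}{2}K_3$). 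Collecting these yields a per-step additive error of the form $\max(\gamma'\delta^2,\mu\delta)$, after which the recursion $a_{k+1}\le(1-2\kappa\alpha m)a_k+c$ with $a_k=f_2(t_k)-f^\star(t_k)$ is unrolled as a geometric series, the summation producing the denominators $4\kappa^2\alpha^2 m$ and $4\kappa^2\alpha^2 m^2$ and hence the ultimate bound $E_2$ in~\eqref{eq::bound_on_solu_2}.

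The crux, and the only genuine departure from Theorem~\ref{thm::main1}, is controlling the finite-difference approximation error. The key estimate is $\frac{|f(\vect{x}_k,t_k)-f(\vect{x}_k,t_{k-1})|}{\delta}\le K_1+\frac{\delta}{2}K_3$, obtained by Taylor-expanding $f(\vect{x}_k,t_{k-1})$ in time about $t_k$ and bounding the first- and second-order remainders with $K_1$ and $K_3$ (exactly the estimate already invoked in Remark~\ref{delta:Algorithm2}). This is why wherever the bound $K_1$ on $|\nabla_t f|$ enters the Algorithm~\ref{Alg::1} analysis, it is replaced here by $K_1+\frac{\delta}{2}K_3$: in particular the quadratic $\nabla_{\vect{xx}} f$ remainder scales like $\|\vect{x}^-_{k+1}-\vect{x}_k\|^2\le \frac{\delta^2(K_1+\frac{\delta}{2}K_3)^2}{\epsilon^2}$, whose expansion accounts for the $\frac{K_1^2 M}{\epsilon^2}$ term and the extra $\frac{\delta^2 K_3^2 M}{4\epsilon^2}$ term in $\gamma'$, while the mixed $\nabla_{\vect{x}t} f$ remainder produces the $\frac{K_2(K_1+\frac{\delta}{2}K_3)}{\epsilon}$ term. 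I expect the main obstacle to be bookkeeping rather than any new idea: keeping the sign analysis of $\nabla_t f$ consistent across the absolute values and tracking the $K_3$-dependent corrections tightly enough to recover the stated $\gamma'$ instead of a looser constant.
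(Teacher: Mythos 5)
Your proposal is correct and follows essentially the same route as the paper's proof: the same gradient-descent contraction for the update step, the same telescoping decomposition with Lemma~\ref{bound_traj_opt} controlling $f^\star(t_k)-f^\star(t_{k+1})$, and the same key estimate $|f(\vect{x}_k,t_k)-f(\vect{x}_k,t_{k-1})|/\delta\le K_1+\tfrac{\delta}{2}K_3$ obtained by Taylor-expanding in time. The only detail worth noting is that the paper recovers the exact $\gamma'$ by bounding the squared finite difference via $|a+b|^2\le 2|a|^2+2|b|^2$ (which is why $\frac{K_1^2M}{\epsilon^2}$ appears without the factor $\tfrac{1}{2}$ seen in Theorem~\ref{thm::main1} and why no $K_1K_3$ cross term survives), rather than by directly expanding $(K_1+\tfrac{\delta}{2}K_3)^2$ as you suggest.
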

See the appendix for proof of Theorem~\ref{thm::main2}.

Based on Theorem~\ref{thm::main2}, a comparable statement to Remark~\ref{rem::Algorithm1_1} can be made regarding the ultimate tracking bound of Algorithm~\ref{Alg::2}. It is also noteworthy that the tracking error of Algorithm~\ref{Alg::2}, given its use of an estimate for $\nabla_{t} f(\vect{x}_{k},t_k)$, may be larger than that of Algorithm~\ref{Alg::1} because $\gamma' \geq \gamma$.

\begin{algorithm}[t]
\caption{$O(n)$ algorithm with $\nabla_t f(\vect{x}_k,t_k)$ and $\nabla_{\vect{x}t} f(\vect{x}_k,t_k)$}
\begin{algorithmic}[1]
\State \textbf{Initialization:} $\vect{x}_0\in\real^n$, $\eps, \delta\in\real_{>0}$, $f(\vect{x}_0,t_0)\in\real$
\Statex \!\!\!\!\!\!\!\!\underline{\textsf{Prediction}}
\If {$\|\nabla_{\vect{x}}f(\vect{x}_k,t_k) + \delta\nabla_{\vect{x}t} f(\vect{x}_k,t_k)\|\geq \eps$ and $\nabla_{\vect{x}t} f(\vect{x}_k,t_k)^\top \nabla_{\vect{x}} f(\vect{x}_k,t_k) \leq 0$}
    \State \mbox{$\vect{x}^-_{k+1}=
    \vect{x}_{k} -\frac{\delta|\nabla_t f(\vect{x}_k,t_{k})|}{\|\nabla_{\vect{x}}f(\vect{x}_k,t_k) + \delta \nabla_{\vect{x}t} f(\vect{x}_k,t_k) \|^2}$} 
    \par
    \mbox{$\big(\nabla_{\vect{x}}f(\vect{x}_k,t_k) + \delta\nabla_{\vect{x}t} f(\vect{x}_k,t_k)\big)$}
\ElsIf{$\|\nabla_{\vect{x}}f(\vect{x}_k,t_k)\|\geq \eps$}
    \State $\vect{x}^-_{k+1} = \vect{x}_{k} - \delta \frac{|\nabla_t f(\vect{x}_k,t_{k})|}{\|\nabla_{\vect{x}}f(\vect{x}_k,t_k)\|^2} \nabla_{\vect{x}}f(\vect{x}_k,t_k)$
\Else
    \State $\vect{x}^-_{k+1} = \vect{x}_{k}$
\EndIf
\Statex \!\!\!\!\!\!\!\!\underline{\textsf{Update}}
\State $\vect{x}_{k+1} = \vect{x}^{-}_{k+1} - \alpha \nabla_{\vect{x}} f(\vect{x}^{-}_{k+1}, t_{k+1})$
\end{algorithmic}
\label{Alg::3}
\end{algorithm}

\begin{figure}[t]
  \centering
    \includegraphics[scale=0.28]{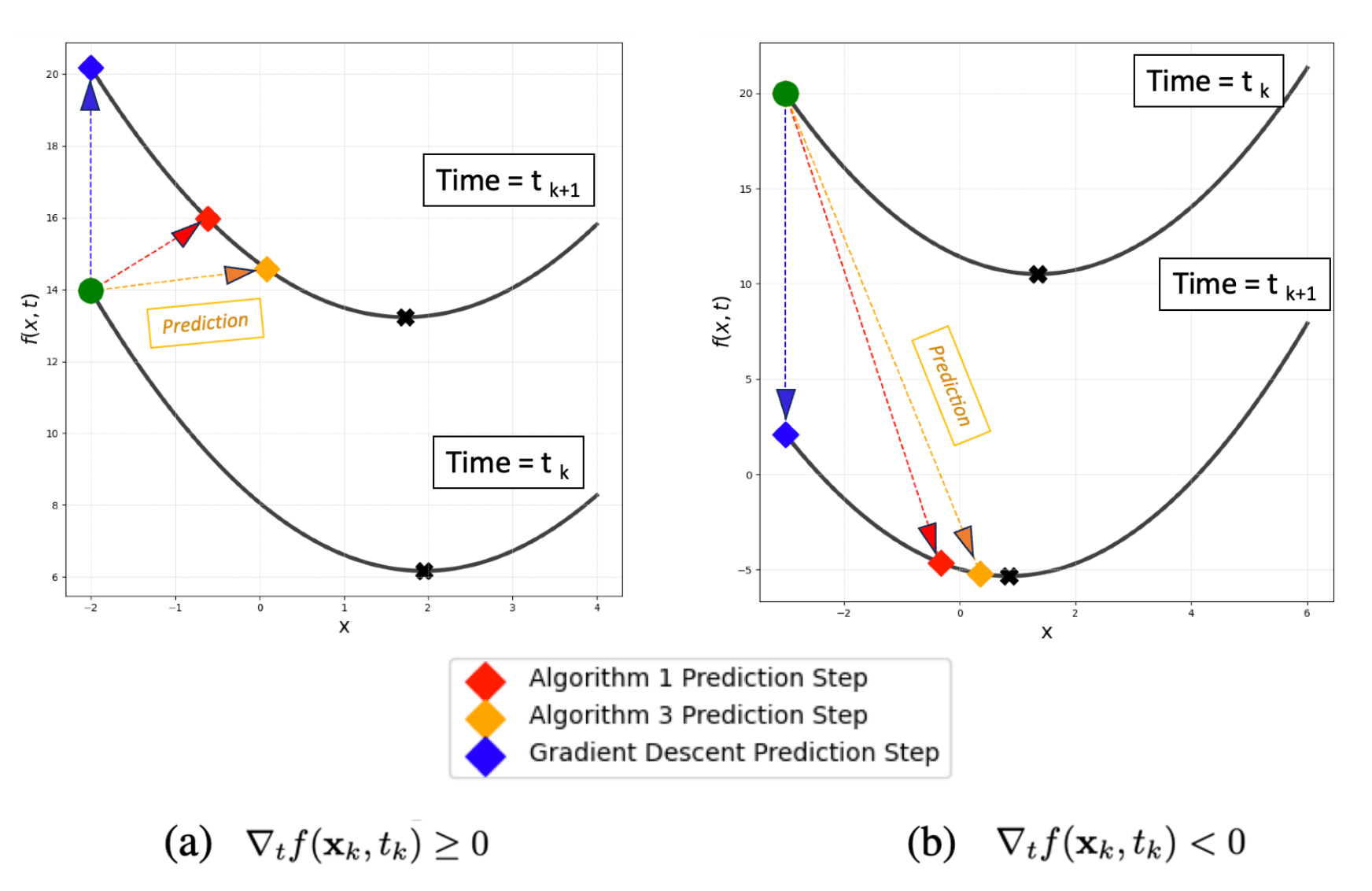}
      \caption{\small{An example case that demonstrates the role of the prediction step (line 3) of Algorithm \ref{Alg::3}. As we can see in this example, for both cases of $\nabla_t f(\vect{x}_k,t_k) \geq 0 $ and $\nabla_t f(\vect{x}_k,t_k) < 0 $ the statement of Lemma~\ref{lem::Alg3_vs_Alg1} holds, i.e., $ f^-_3(t_{k+1})\leq f^-_1(t_{k+1})\leq f^-_{\text{g}}(t_{k+1})$.}}
      \label{fig:example-3}
\end{figure}

\subsection{An $O(n)$ algorithm that uses the time derivative of the gradient}

So far, we have noted the improvement brought by using temporal information $\nabla_t f$ in the prediction step. Now, we explore whether incorporating additional temporal insights, such as changes in the gradient over time, can further amplify this advantage. To achieve this, we introduce Algorithm~\ref{Alg::3}, which utilizes extra temporal information about the cost function, specifically $\nabla_{\vect{x}t} f(\vect{x}_k,t_k)$ that still needs an $O(n)$ computational cost. The result below formally shows that, the enhanced prediction step of Algorithm~\ref{Alg::3} leads to improved performance; see Fig.~\ref{fig:example-3} for a demonstration. \blue{The design of the prediction step in Algorithm~3 is derived from the first-order Taylor series expansion of $f(\vect{x}_{k+1}^-, t_{k+1})$. It purposefully crafts a prediction step using only $\nabla_t f(x)$, $\nabla_{\vect{x}} f$, and $\nabla_{\vect{x}t} f$ to ensure $f^-_3(t_{k+1}) \leq f^-_1(t_{k+1})$, as detailed in the proof of the result below.}

\begin{lem}\longthmtitle{Prediction step of Algorithm~\ref{Alg::3} out performs that of Algorithm~\ref{Alg::1}}\label{lem::Alg3_vs_Alg1}
{\rm Consider Algorithm~\ref{Alg::1} and Algorithm~\ref{Alg::3}. Let $\delta=(t_{k+1}-t_k)$ be the same for both algorithms. Let $f^-_1(t_{k+1})=f(\vect{x}^-_{k+1},t_{k+1})$ and $f^-_3(t_{k+1})=f(\vect{x}^-_{k+1},t_{k+1})$ be computed in the first-order sense from~\eqref{eq::first_order_approx}, respectively, for Algorithm~\ref{Alg::1} and Algorithm~\ref{Alg::3} after their respective prediction step. Suppose that $\vect{x}_k$ for both algorithms has the same value. Then, for any $\eps\in\real_{\geq0}$, we have $ f^-_3(t_{k+1})\leq f^-_1(t_{k+1})$. }
\end{lem}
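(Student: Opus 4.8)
The plan is to substitute each algorithm's prediction step into a first-order model of $f(\vect{x}^-_{k+1},t_{k+1})$ and compare the two values. The conceptual point to settle first is \emph{which} first-order model to use: because the step of Algorithm~\ref{Alg::3} is built from $\nabla_{\vect{x}t}f$, the model must retain the first-order-in-$\delta$ correction of the gradient, i.e.\ the linear term in \eqref{eq::first_order_approx} should be evaluated with the time-advanced gradient $\tilde{\vect{g}}:=\nabla_{\vect{x}}f(\vect{x}_k,t_k)+\delta\,\nabla_{\vect{x}t}f(\vect{x}_k,t_k)$, which is the first-order approximation of $\nabla_{\vect{x}}f(\vect{x}_k,t_{k+1})$ (equivalently, one keeps the cross term $\delta\,\nabla_{\vect{x}t}f^\top(\vect{x}^-_{k+1}-\vect{x}_k)$). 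Since $\vect{x}_k$ and $t_{k+1}$ are common to both algorithms, the additive terms $f(\vect{x}_k,t_k)$ and $\delta\,\nabla_t f$ are identical, so comparing $f^-_3$ and $f^-_1$ reduces to comparing $\tilde{\vect{g}}^\top(\vect{x}^-_{k+1}-\vect{x}_k)$ for the two prediction rules.

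Next I would treat the main case, in which Algorithm~\ref{Alg::3} is in its first branch (so $\nabla_{\vect{x}t}f^\top\nabla_{\vect{x}}f\le 0$ and $\|\tilde{\vect{g}}\|\ge\eps$) and Algorithm~\ref{Alg::1} takes its nontrivial step (so $\|\nabla_{\vect{x}}f\|\ge\eps$). Writing $\vect{g}=\nabla_{\vect{x}}f(\vect{x}_k,t_k)$ and $\vect{h}=\nabla_{\vect{x}t}f(\vect{x}_k,t_k)$, the step of Algorithm~\ref{Alg::3} is collinear with $\tilde{\vect{g}}$, giving $\tilde{\vect{g}}^\top(\vect{x}^-_{k+1}-\vect{x}_k)=-\delta|\nabla_t f|$, whereas the step of Algorithm~\ref{Alg::1} along $\vect{g}$ gives $\tilde{\vect{g}}^\top(\vect{x}^-_{k+1}-\vect{x}_k)=-\delta|\nabla_t f|-\frac{\delta^2|\nabla_t f|}{\|\vect{g}\|^2}\,\vect{h}^\top\vect{g}$, using $\tilde{\vect{g}}^\top\vect{g}=\|\vect{g}\|^2+\delta\,\vect{h}^\top\vect{g}$. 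Subtracting yields $f^-_3-f^-_1=\frac{\delta^2|\nabla_t f|}{\|\vect{g}\|^2}\,\nabla_{\vect{x}t}f^\top\nabla_{\vect{x}}f$, which is nonpositive precisely because the branch-1 guard enforces $\nabla_{\vect{x}t}f^\top\nabla_{\vect{x}}f\le 0$. This is where the condition in line~1 of Algorithm~\ref{Alg::3} earns its keep.

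I would then dispatch the remaining branch combinations, which are all immediate. If branch~1 of Algorithm~\ref{Alg::3} is inactive, then either $\|\nabla_{\vect{x}}f\|\ge\eps$ and Algorithm~\ref{Alg::3} takes \emph{exactly} Algorithm~\ref{Alg::1}'s $\nabla_{\vect{x}}f$-step, so $f^-_3=f^-_1$; or $\|\nabla_{\vect{x}}f\|<\eps$, in which case both algorithms set $\vect{x}^-_{k+1}=\vect{x}_k$ and again $f^-_3=f^-_1$. The only other possibility is $\|\nabla_{\vect{x}}f\|<\eps$ while branch~1 is active (possible when $\|\tilde{\vect{g}}\|\ge\eps$): there Algorithm~\ref{Alg::1} makes no move, so $\tilde{\vect{g}}^\top(\vect{x}^-_{k+1}-\vect{x}_k)=0$, while Algorithm~\ref{Alg::3} still achieves $\tilde{\vect{g}}^\top(\vect{x}^-_{k+1}-\vect{x}_k)=-\delta|\nabla_t f|\le 0$, hence $f^-_3\le f^-_1$. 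The $\eps$-guards keep every denominator away from zero, and $|\nabla_t f|=0$ gives equality throughout.

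The hard part is not the algebra but the modeling step in the first paragraph. With the naive first-order model that drops the cross term (the literal right-hand side of \eqref{eq::first_order_approx} with the gradient frozen at $t_k$), the time-derivative-of-gradient information $\nabla_{\vect{x}t}f$ never enters the predicted value, and the stated hypothesis $\nabla_{\vect{x}t}f^\top\nabla_{\vect{x}}f\le 0$ would be neither used nor sufficient to order $f^-_3$ and $f^-_1$. Matching the model to the way the step of Algorithm~\ref{Alg::3} is derived — advancing the gradient to $t_{k+1}$ to first order in $\delta$ — is exactly what makes the branch-1 condition the natural and sharp hypothesis. The secondary care required is the bookkeeping of the three branches of Algorithm~\ref{Alg::3} against the two of Algorithm~\ref{Alg::1}, which I have organized above by the status of branch~1.
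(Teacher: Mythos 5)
Your argument is correct, and it follows the same basic route as the paper's proof: substitute each prediction step into a first-order expansion of $f(\vect{x}^-_{k+1},t_{k+1})$, reduce the comparison to the linear term, and invoke the line-2 guard $\nabla_{\vect{x}t}f^\top\nabla_{\vect{x}}f\le 0$ to sign the difference; your branch bookkeeping (including the case where line 2 of Algorithm~\ref{Alg::3} fires while $\|\nabla_{\vect{x}}f\|<\eps$) matches the paper's indicator-$\mathcal{I}$ treatment in~\eqref{eq::pred_alg1_first_order}. The one genuine divergence is your handling of the cross term, and it is the part that matters. The paper keeps the literal model~\eqref{eq::first_order_approx} (gradient frozen at $t_k$) and then adds and subtracts $\delta\,\nabla_{\vect{x}t}f^\top(\vect{x}^-_{k+1}-\vect{x}_k)$; carried out exactly, with $\tilde{\vect{g}}:=\nabla_{\vect{x}}f+\delta\nabla_{\vect{x}t}f$ that bookkeeping yields $f^-_3-f^-_1=\tfrac{\delta^2|\nabla_t f|}{\|\tilde{\vect{g}}\|^2}\,\nabla_{\vect{x}t}f^\top\tilde{\vect{g}}=\tfrac{\delta^2|\nabla_t f|}{\|\tilde{\vect{g}}\|^2}\bigl(\nabla_{\vect{x}t}f^\top\nabla_{\vect{x}}f+\delta\|\nabla_{\vect{x}t}f\|^2\bigr)$, whose second piece is nonnegative; the displayed expansion~\eqref{eq::pred_alg3} records that piece with a minus sign, which is how the paper concludes that ``the last two terms are always negative.'' Your choice to evaluate the linear term at the time-advanced gradient $\tilde{\vect{g}}$ removes exactly that stray $+\delta\|\nabla_{\vect{x}t}f\|^2$ contribution, turning the difference into $\tfrac{\delta^2|\nabla_t f|}{\|\nabla_{\vect{x}}f\|^2}\,\nabla_{\vect{x}t}f^\top\nabla_{\vect{x}}f\le 0$, so the branch-1 guard becomes precisely the hypothesis needed and the argument closes cleanly where the literal frozen-gradient accounting would require the stronger condition $\nabla_{\vect{x}t}f^\top\tilde{\vect{g}}\le 0$. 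One small inaccuracy in your commentary: even under the frozen-gradient model, $\nabla_{\vect{x}t}f$ does enter the predicted value, through the direction of Algorithm~\ref{Alg::3}'s step via $\nabla_{\vect{x}}f^\top\tilde{\vect{g}}$; your substantive point --- that the stated guard is then not sufficient by itself --- is nonetheless right.
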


\begin{proof}
Considering the first-order Taylor series expansion for Algorithm \ref{Alg::1}, we arrive at 
\vspace{-0.08in}
\begin{align}\label{eq::pred_alg1_first_order}
f^-_\text{1}(t_{k+1})
\!\approx\,&f(\vect{x}_k,t_{k})\!+\delta\,(\nabla_t f(\vect{x}_k,t_k)\!-\mathcal{I}|\nabla_t f(\vect{x}_k,t_k)|), 
\end{align}
where 
$\|\nabla_{\vect{x}}f(\vect{x}_k,t_k)\|\geq \eps$ leads to  $\mathcal{I}=1$, otherwise   $\mathcal{I}=0$. 

Algorithm~\ref{Alg::3}'s prediction is different than Algorithm~\ref{Alg::1}'s when condition in it's line 2 is active. Under this condition,  adding and subtracting $\delta \nabla_{\vect{x}t}  f(\vect{x}_k,t_k))^\top
(\vect{x}^-_{k+1}-\vect{x}_k)$ to the right hand side of~\eqref{eq::first_order_approx}, and substituting for $\vect{x}^-_{k+1}$ from line 3 of Algorithm~\ref{Alg::3} leads to 
\begin{align}\label{eq::pred_alg3}
    &f^-_\text{3}(t_{k+1})\approx f(\vect{x}_k,t_{k})+\delta\,(\nabla_t f(\vect{x}_k,t_k)-|\nabla_t f(\vect{x}_k,t_k)|)\,+ \nonumber \\
    & \frac{\delta\,|(\nabla_t f(\vect{x}_k,t_k)|}{\|\nabla_{\vect{x}}f(\vect{x}_k,t_k) \!+\! \delta\nabla_{\vect{x}t}  f(\vect{x}_k,t_k)\|^2} \nabla_{\vect{x}t}  f(\vect{x}_k,t_k)^\top \nabla_{\vect{x}}  f(\vect{x}_k,t_k) \nonumber\\
&-\frac{\delta\,|(\nabla_t f(\vect{x}_k,t_k)|}{\|\nabla_{\vect{x}}f(\vect{x}_k,t_k) + \delta\nabla_{\vect{x}t}  f(\vect{x}_k,t_k)\|^2} \|\nabla_{\vect{x}t}  f(\vect{x}_k,t_k)\|^2.
\end{align}
In this relationship, since $\nabla_{\vect{x}t}  f(\vect{x}_k,t_k)^\top \nabla_{\vect{x}}  f(\vect{x}_k,t_k) \leq 0$, the last two terms are always negative. Therefore, considering~\eqref{eq::pred_alg1_first_order}, regardless of the value of $\|\nabla_{\vect{x}} f(\vect{x}_k,t_k)\|$, when condition in line $2$ of Algorithm~\ref{Alg::3}'s is active, we have $f^-_3(t_{k+1})\leq f^-_1(t_{k+1})$. When this condition is not satisfied prediction step of Algorithms~\ref{Alg::1} and \ref{Alg::3} are the same and as such $f^-_3(t_{k+1})= f^-_1(t_{k+1})$, establishing that for any value of $\vect{x}_k$, $f^-_3(t_{k+1})= f^-_1(t_{k+1})$. 
\end{proof}

 A result similar to Lemma~\ref{lemma::valid_delta} can be established for ranges of stepsize $\delta$ for which the first-order approximation is the dominant component and $f^-_3(t_{k+1})\leq f^-_1(t_{k+1})$ in the exact sense. Details are omitted for the sake of brevity. Next, we present the convergence analysis of Algorithm~\ref{Alg::3}.

\begin{thm}\longthmtitle{Convergence guarantee of Algorithm~\ref{Alg::3}}\label{thm::main1::alg3}
{\rm Let 
Assumptions~\ref{asm:str_convexity} and~\ref{asm:bound_dfstar}  hold. Then,  Algorithm~\ref{Alg::3} converges to the neighborhood of the optimum solution of~\eqref{eq::opt} as characterized by
\begin{align}\label{eq::bound_on_solu_5}
f_3(t_{k+1})-f^{\star}(t_{k+1}) \leq 
&(1-2\kappa\alpha m)^k(f_3(t_{0})-f^{\star}(t_{0})) \nonumber \\
+ \big(1-(1-2\kappa\alpha m)^k\big) E_3,
\end{align}

where $E_3=\frac{\psi}{4\kappa^2\alpha^2 m}+\frac{\max(\gamma\delta^2,
\mu\delta, \eta \delta^2)}{4\kappa^2\alpha^2 m^2}$, $\eta=\frac{2}{\delta}K_1 + \frac{1}{2}K_3 + \frac{2}{\eps}K_1K_2+\frac{M}{2\eps^2}K_1^2$, $\mu=K_1+\frac{\delta^2}{2}K_3$, $\gamma= \frac{2}{\delta}K_1 + \frac{M}{2\epsilon^2}K_1^2+\frac{1}{2}K_3 + \frac{1}{\epsilon}K_1K_2 $ and $\kappa=(1-\alpha M/2)$, provided that $0<\alpha\leq \frac{1}{2M}$.
}
\end{thm}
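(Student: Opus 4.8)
The plan is to establish a one-step recursion on the tracking gap $V_k := f_3(t_k)-f^{\star}(t_k)$ and then unroll it, following the same template used for Theorems~\ref{thm::main1} and~\ref{thm::main2}. I would decompose a single iteration of Algorithm~\ref{Alg::3} into three effects: the optimal-value drift $f^{\star}(t_k)\to f^{\star}(t_{k+1})$, the prediction step $\vect{x}_k\to\vect{x}^-_{k+1}$ evaluated at $t_{k+1}$, and the gradient-descent update $\vect{x}^-_{k+1}\to\vect{x}_{k+1}$. Writing $\lambda:=1-2\kappa\alpha m$, the goal is to reach a recursion of the form $V_{k+1}\leq \lambda V_k + (1-\lambda)E_3$, which unrolls immediately to~\eqref{eq::bound_on_solu_5}.

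For the update step I would invoke the descent lemma for the $M$-smooth map $\vect{x}\mapsto f(\vect{x},t_{k+1})$ (Assumption~\ref{asm:M_Lip}), giving $f(\vect{x}_{k+1},t_{k+1})\leq f(\vect{x}^-_{k+1},t_{k+1})-\alpha\kappa\|\nabla_{\vect{x}}f(\vect{x}^-_{k+1},t_{k+1})\|^2$ with $\kappa=1-\alpha M/2>0$ for $0<\alpha\leq\frac{1}{2M}$. Combining this with the gradient-dominance inequality $\|\nabla_{\vect{x}}f\|^2\geq 2m\,(f-f^{\star})$ implied by $m$-strong convexity (Assumption~\ref{asm:str_convexity}) yields the contraction $f(\vect{x}_{k+1},t_{k+1})-f^{\star}(t_{k+1})\leq \lambda\,(f(\vect{x}^-_{k+1},t_{k+1})-f^{\star}(t_{k+1}))$.

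Next I would bound the post-prediction gap $f(\vect{x}^-_{k+1},t_{k+1})-f^{\star}(t_{k+1})$ in terms of $V_k$. The optimal-value drift contributes $|f^{\star}(t_{k+1})-f^{\star}(t_k)|\leq\psi$ by Lemma~\ref{bound_traj_opt}, while the prediction-induced change $f(\vect{x}^-_{k+1},t_{k+1})-f(\vect{x}_k,t_k)$ is bounded branch by branch via the exact Taylor expansion~\eqref{eq::exact_taylor} and Assumption~\ref{asm:bound_dfstar}. The inactive branch ($\vect{x}^-_{k+1}=\vect{x}_k$) reproduces the $\mu\delta$ term; the branch using only $\nabla_{\vect{x}}f$ reproduces the $\gamma\delta^2$ term exactly as in Lemma~\ref{lemma::valid_delta} and Theorem~\ref{thm::main1}; and the new branch (lines 2--3) produces the $\eta\delta^2$ term. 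For this last branch I would substitute line~3 into~\eqref{eq::exact_taylor}, use the threshold $\|\nabla_{\vect{x}}f(\vect{x}_k,t_k)+\delta\nabla_{\vect{x}t}f(\vect{x}_k,t_k)\|\geq\eps$ to control the reciprocal-norm factors by $1/\eps$, invoke the sign condition $\nabla_{\vect{x}t}f(\vect{x}_k,t_k)^\top\nabla_{\vect{x}}f(\vect{x}_k,t_k)\leq 0$ to eliminate the dominant cross term exactly as in Lemma~\ref{lem::Alg3_vs_Alg1}, and bound the remaining second-order remainders through $M$ and $K_2,K_3$; collecting the surviving $O(\delta^2)$ terms produces the coefficient $\eta$. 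Taking the worst case over the three branches gives the uniform bound $\max(\gamma\delta^2,\mu\delta,\eta\delta^2)$.

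Assembling these pieces yields $V_{k+1}\leq\lambda V_k+(1-\lambda)E_3$ with $E_3$ as stated, after which unrolling the geometric recursion gives~\eqref{eq::bound_on_solu_5}. I expect the main obstacle to be the $\eta\delta^2$ estimate together with the bookkeeping that fixes the precise $m$-dependence of $E_3$: keeping the cross terms between the prediction displacement and the gradient and Hessian controlled requires a careful Young-type splitting and repeated use of the strong-convexity relation between gradient norms and function-value gaps, and it is this conversion that injects the extra factors of $m$ distinguishing the $\psi$ and $\max(\cdot)$ contributions in $E_3$. The remaining steps are routine once the three-branch Taylor bound is in place.
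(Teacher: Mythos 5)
Your proposal follows essentially the same route as the paper's proof: a branch-by-branch exact-Taylor bound on the prediction step (yielding $\max(\gamma\delta^2,\mu\delta,\eta\delta^2)$, with the new line-2/3 branch controlled via the threshold $1/\eps$ bound), the descent-plus-strong-convexity contraction $(1-2\kappa\alpha m)$ for the update, the drift bound $\psi$ from Lemma~\ref{bound_traj_opt}, and unrolling the resulting recursion. The only cosmetic difference is that the paper bounds the new branch purely by Cauchy--Schwarz in absolute value rather than using the sign condition to drop the cross term, but this does not change the argument.
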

See the appendix for proof of Theorem~\ref{thm::main1::alg3}.

Note that a similar approximate version of Algorithm \ref{Alg::3} where the exact value of $\nabla_t  f(\vect{x}_k,t_k)$ and $\nabla_{\vect{x}t}  f(\vect{x}_k,t_k)$ is not available can be obtained by the same approximation approach as in Algorithm \ref{Alg::2}. To maintain brevity, we do not delve into further details here.

 \begin{rem}
\longthmtitle{Incorporating higher order derivative of time in Algorithm~\ref{Alg::3}}\label{remark::alg3}
{
\rm
Note that Algorithm~\ref{Alg::3} provides an insight into how the changes of the gradient over time can enhance performance.Thus, by using the same amount of computing power, we can include higher order derivatives of time, e.g., $\nabla_{tt} f(\vect{x}_k,t_k)$,  to improve the performance of Algorithm~\ref{Alg::3}.
}
\boxend
\end{rem}

\section{A hybrid first- and second-order algorithms}\label{second_order}
In the proposed algorithms so far, when the trajectories of the decision variable reaches a point $\vect{x}_k$ that results in $\|\nabla_{\vect{x}} f(\vect{x}_k,t_k)\|\leq \eps$, the prediction step cannot be implemented, because of the possibility of getting too close to division by zero. Subsequently, these $O(n)$-algorithms when $\|\nabla_{\vect{x}} f(\vect{x}_k,t_k)\|\leq \eps$  behave like the gradient descent algorithm. In this section, we propose Algorithm~\ref{Alg::4}, a hybrid algorithm that switches to a second-order gradient tracking prediction step when $\|\nabla_{\vect{x}} f(\vect{x}_k,t_k)\|\leq \eps$. The expectation is that using an additional computational cost, which is only used when $\|\nabla_{\vect{x}} f(\vect{x}_k,t_k)\|\leq \eps$, we achieve better tracking. A first-order analysis of the prediction and update steps of the hybrid Algorithm~\ref{Alg::4} shines light on usefulness of using a hybrid algorithm. More specifically, when we consider the first-order approximate Taylor series expansion of the gradient after the second-order prediction (line 5 of Algorithm~\ref{Alg::4}), we arrive at 
\begin{align}\label{eq::Alg_4_pred_2nd}
    \nabla_{\vect{x}} f(\vect{x}_{k+1}^-,t_{k+1}) \approx &\,\nabla_{\vect{x}} f(\vect{x}_{k},t_k) + \nabla_{\vect{x}t} f(\vect{x}_{k},t_k) \,\delta\,+\, \nonumber\\
    & \,\nabla_{\vect{xx}} f(\vect{x}_{k},t_k)(\vect{x}^-_{k+1}\!-\!\vect{x}_k)\!\nonumber\\
    =&\,\nabla_{\vect{x}} f(\vect{x}_{k},t_k)
\end{align}
Subsequently, the gradient after the update step (line 7 of Algorithm~\ref{Alg::4}) using the first-order Taylor series explanation and substituting for $\nabla_{\vect{x}} f(\vect{x}_{k+1}^-,t_{k+1})$ from~\eqref{eq::Alg_4_pred_2nd} reads as 
\begin{align*}
   \nabla_{\vect{x}} f(\vect{x}_{k+1},t_{k+1}) \approx&\, \nabla_{\vect{x}} f(\vect{x}_{k+1}^-,t_{k+1})\,+ \nonumber \\
    & \nabla_{\vect{xx}} f(\vect{x}_{k+1}^-,t_{k+1})(\vect{x}_{k+1}-\vect{x}^-_{k+1}) \\=
    &(I-\alpha \nabla_{\vect{xx}} f(\vect{x}_{k+1}^-,t_{k+1})) \nabla_{\vect{x}} f(\vect{x}_{k},t_k).
\end{align*}
Then, for $\alpha\leq \frac{2}{m}$, in first-order sense we have  \begin{align}\label{eq:Alg_4_nabla_eps_zone}\|\nabla_{\vect{x}} f(\vect{x}_{k+1},t_{k+1})\|\leq \|\nabla_{\vect{x}} f(\vect{x}_{k},t_{k})\|\leq \epsilon.\end{align}
Since~\eqref{eq:Alg_4_nabla_eps_zone} is drawn from a first-order approximate analysis, it only gives a qualitative picture of the performance of Algorithm~\ref{Alg::4}. In fact, it is shown in~\cite{AS-AM-AK-GL-AR:16} that even if the second-order prediction step is used at all times, the algorithm can only achieve tracking with a non-zero steady-state tracking bound. Therefore, we expect that the use of a second-order prediction step in $\|\nabla_{\vect{x}} f(\vect{x}_k,t_k)\|\leq \eps$ will not result in zero steady-state tracking. Nevertheless, our numerical examples confirm the expectation that a better tracking behavior can be achieved by Algorithm~\ref{Alg::4}. The next result characterizes the convergence behavior of Algorithm~\ref{Alg::4}. 
\medskip

\begin{algorithm}[t]
\caption{Hybrid algorithm with $\nabla_t f(\vect{x}_k,t_k)$ and $\nabla_{\vect{x}t} f(\vect{x}_k,t_k)$}\label{Alg::4}
\begin{algorithmic}[1]
\State \textbf{Initialization:} $\vect{x}_0 \in \real^n$, $\eps, \delta \in \real_{>0}$, $f(\vect{x}_0, t_0) \in \real$
\Statex \!\!\!\!\!\!\!\!\underline{\textsf{Prediction}}
\If{$\|\nabla_{\vect{x}}f(\vect{x}_k, t_k)\| \geq \eps$}
    \State $\vect{x}^-_{k+1} = \vect{x}_{k} - \delta \frac{|\nabla_t f(\vect{x}_k, t_k)|}{\|\nabla_{\vect{x}} f(\vect{x}_k, t_k)\|^2} \nabla_{\vect{x}} f(\vect{x}_k, t_k)$
\Else
    \State $\vect{x}^-_{k+1} = \vect{x}_{k} - \delta (\nabla_{\vect{xx}} f(\vect{x}_k, t_k))^{-1} \nabla_{\vect{x}t} f(\vect{x}_k, t_k)$
\EndIf
\Statex \!\!\!\!\!\!\!\!\underline{\textsf{Update}}
\State $\vect{x}_{k+1} = \vect{x}^{-}_{k+1} - \alpha \nabla_{\vect{x}} f(\vect{x}^{-}_{k+1}, t_{k+1})$
\end{algorithmic}
\end{algorithm}

\begin{thm}
\longthmtitle{Convergence analysis of Algorithms~\ref{Alg::4}}\label{thm6}
{\rm Let 
Assumptions~\ref{asm:str_convexity} and~\ref{asm:bound_dfstar}  hold. Then,  Algorithm~\ref{Alg::4} converges to the neighborhood of the optimum solution of~\eqref{eq::opt} with the following upper bound
\begin{align}\label{eq::bound_on_solu_3}
\!\!\!\!f_4(t_{k+1})-f^{\star}(t_{k+1}) \!\leq 
&(1-2\kappa\alpha m)^k(f_4(t_{0})-f^\star(t_{0})) \nonumber \\
&+ \big(1-(1-2\kappa\alpha m)^k\big) E_4,
\end{align}
where $E_4=\frac{\psi}{4\kappa^2\alpha^2 m}+\frac{\max(\gamma\delta^2,
\mu\delta)}{4\kappa^2\alpha^2 m^2}$, $\psi = \delta (K_1 + \frac{\delta}{2}K_3) + \frac{K^2_2 \delta^2}{2m} (\frac{M\delta}{m} + 2)$, $\delta=t_{k+1}-t_k$, $\gamma= \frac{2K_1}{\delta} + \frac{M}{2\epsilon^2}K_1^2+\frac{1}{2}K_3 + \frac{1}{\epsilon}K_1K_2$, $\mu=\eps m K_2 + K_1 + \frac{\delta}{2}K_3 + \frac{3}{2} m \delta K_2^2$ and $\kappa=(1-\alpha M/2)$, provided that $0<\alpha\leq \frac{1}{2M}$.}
\end{thm}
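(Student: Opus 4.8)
The plan is to follow the same recursive template that underlies the appendix proof of Theorem~\ref{thm::main1}, since Algorithm~\ref{Alg::4} coincides with Algorithm~\ref{Alg::1} except that the branch $\|\nabla_{\vect{x}}f(\vect{x}_k,t_k)\|<\eps$ now performs a second-order prediction instead of $\vect{x}^-_{k+1}=\vect{x}_k$. Accordingly I would isolate three ingredients -- the update-step contraction, the time-shift of the optimal value, and the prediction-step perturbation -- and assemble them into a one-step inequality for $a_k:=f_4(t_k)-f^\star(t_k)$. Concretely, I would write $f(\vect{x}^-_{k+1},t_{k+1})-f^\star(t_{k+1})$ as the sum of the prediction perturbation $f(\vect{x}^-_{k+1},t_{k+1})-f(\vect{x}_k,t_k)$, the gap $a_k$, and the optimal-value shift $f^\star(t_k)-f^\star(t_{k+1})$, and then compose this with the update-step contraction so that unrolling a geometric recursion with ratio $(1-2\kappa\alpha m)$ produces \eqref{eq::bound_on_solu_3}.

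For the update step I would invoke the descent lemma for the $M$-Lipschitz gradient applied to $\vect{x}_{k+1}=\vect{x}^-_{k+1}-\alpha\nabla_{\vect{x}}f(\vect{x}^-_{k+1},t_{k+1})$, yielding $f_4(t_{k+1})\le f(\vect{x}^-_{k+1},t_{k+1})-\kappa\alpha\|\nabla_{\vect{x}}f(\vect{x}^-_{k+1},t_{k+1})\|^2$ with $\kappa=1-\alpha M/2>0$ whenever $0<\alpha\le\frac{1}{2M}$, and then use the gradient-dominance (PL) inequality $\|\nabla_{\vect{x}}f\|^2\ge 2m(f-f^\star)$ implied by $m$-strong convexity (Assumption~\ref{asm:str_convexity}) to convert this into $f_4(t_{k+1})-f^\star(t_{k+1})\le(1-2\kappa\alpha m)\,(f(\vect{x}^-_{k+1},t_{k+1})-f^\star(t_{k+1}))$. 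The optimal value is moved from $t_{k+1}$ back to $t_k$ by $|f^\star(t_{k+1})-f^\star(t_k)|\le\psi$ from Lemma~\ref{bound_traj_opt}. Both of these pieces are verbatim those of Theorem~\ref{thm::main1} and contribute the $\psi$ term of $E_4$.

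The only genuinely new work is bounding the prediction perturbation through the two branches. On the active branch $\|\nabla_{\vect{x}}f(\vect{x}_k,t_k)\|\ge\eps$ the step is exactly that of Algorithm~\ref{Alg::1}, so the Taylor expansion \eqref{eq::exact_taylor} together with $\|\vect{x}^-_{k+1}-\vect{x}_k\|\le\delta K_1/\eps$, $|\nabla_{tt}f|\le K_3$, $\|\nabla_{\vect{x}t}f\|\le K_2$ and $\|\nabla_{\vect{xx}}f\|\le M$ reproduces the coefficient $\gamma$, just as in Lemma~\ref{lemma::valid_delta}. On the new branch $\|\nabla_{\vect{x}}f(\vect{x}_k,t_k)\|<\eps$ I would substitute the prediction $\vect{x}^-_{k+1}-\vect{x}_k=-\delta\,(\nabla_{\vect{xx}}f(\vect{x}_k,t_k))^{-1}\nabla_{\vect{x}t}f(\vect{x}_k,t_k)$ into \eqref{eq::exact_taylor} and bound its first- and second-order remainder terms, controlling the inverse Hessian through $\|(\nabla_{\vect{xx}}f)^{-1}\|\le 1/m$ from strong convexity, using the smoothness bounds $K_1,K_2,K_3$, and crucially using $\|\nabla_{\vect{x}}f(\vect{x}_k,t_k)\|<\eps$ to dominate the linear term $\nabla_{\vect{x}}f(\vect{x}_k,t_k)^\top(\vect{x}^-_{k+1}-\vect{x}_k)$. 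This produces the coefficient $\mu$, and taking the worse of the two branches yields the $\max(\gamma\delta^2,\mu\delta)$ factor of $E_4$.

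Finally I would assemble these into the one-step inequality and unroll the geometric recursion; distributing $\psi$ and $\max(\gamma\delta^2,\mu\delta)$ into the two distinct denominators $4\kappa^2\alpha^2 m$ and $4\kappa^2\alpha^2 m^2$ of $E_4$ is the same bookkeeping as in the proof of Theorem~\ref{thm::main1}, with the extra factor $1/m$ on the prediction term reflecting an additional strong-convexity conversion from step/gradient magnitudes to a function-value gap. I expect the main obstacle to be precisely the second-order branch: unlike the inactive branch of Algorithm~\ref{Alg::1}, it actually moves $\vect{x}_k$ and therefore genuinely perturbs the function, so one must both keep the inverse-Hessian step well controlled (this is where $m$-strong convexity is indispensable, and where the Lipschitz-gradient constant $M$ of Assumption~\ref{asm:M_Lip} quietly enters through the descent lemma, even though the statement cites only Assumptions~\ref{asm:str_convexity} and~\ref{asm:bound_dfstar}) and verify that activating it in the region $\|\nabla_{\vect{x}}f\|<\eps$ degrades only the steady-state constant $E_4$ and not the geometric rate $(1-2\kappa\alpha m)$.
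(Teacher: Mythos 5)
Your proposal follows essentially the same route as the paper's proof: the update-step contraction and the $\psi$ shift are reused verbatim from the proof of Theorem~\ref{thm::main1}, and the only new work is bounding the second-order prediction branch by substituting $\vect{x}^-_{k+1}-\vect{x}_k=-\delta(\nabla_{\vect{xx}}f)^{-1}\nabla_{\vect{x}t}f$ into~\eqref{eq::exact_taylor} and controlling it via $\|(\nabla_{\vect{xx}}f)^{-1}\|\le 1/m$ and $\|\nabla_{\vect{x}}f(\vect{x}_k,t_k)\|<\eps$, which is exactly what the appendix does before unrolling the same geometric recursion. Your observation that the $M$-Lipschitz-gradient condition (Assumption~\ref{asm:M_Lip}) is implicitly required is also consistent with the paper, whose proof invokes it even though the theorem statement cites only Assumptions~\ref{asm:str_convexity} and~\ref{asm:bound_dfstar}.
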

See the appendix for proof of Theorem~\ref{thm6}.


Using a similar approximation approach as described in Algorithm \ref{Alg::2}, we can design a modified version of Algorithm \ref{Alg::4} for scenarios where precise values of $\nabla_t f(\vect{x}_k,t_k)$ and $\nabla_{\vect{x}t} f(\vect{x}_k,t_k)$ are not available. For the sake of brevity, we refrain from providing additional elaboration at this point.

\section{Simulation results}\label{sec::num}
In this section, we demonstrate the effectiveness of our proposed algorithms by providing results from two different examples. The first example demonstrates the results when the exact time derivative of the cost is explicitly given. However, in the second example, which involve solving a Model Predictive Control (MPC) formulated as a time-varying convex optimization problem, the time derivative of the cost is not directly available, so it must be estimated.

\subsection{A case of time-varying cost function whose time derivatives are available explicitly}
 Our first example considers a case of a time-varying cost function 
\begin{align}\label{num1_cost}
&f(x, t)= 0.5\,(x - 2\sin(t))^2 + \cos(3t) \,x,
 \end{align} 
whose time derivatives are available explicitly.
This example aims to demonstrate how the tracking behavior of the algorithms is influenced by the temporal information they use about the cost. 

The algorithms are initialized with $x_0 = 100$, and $\epsilon = 0.3$ is chosen for implementation. 
 \begin{figure}[t]
  \centering
    \includegraphics[scale=0.25]{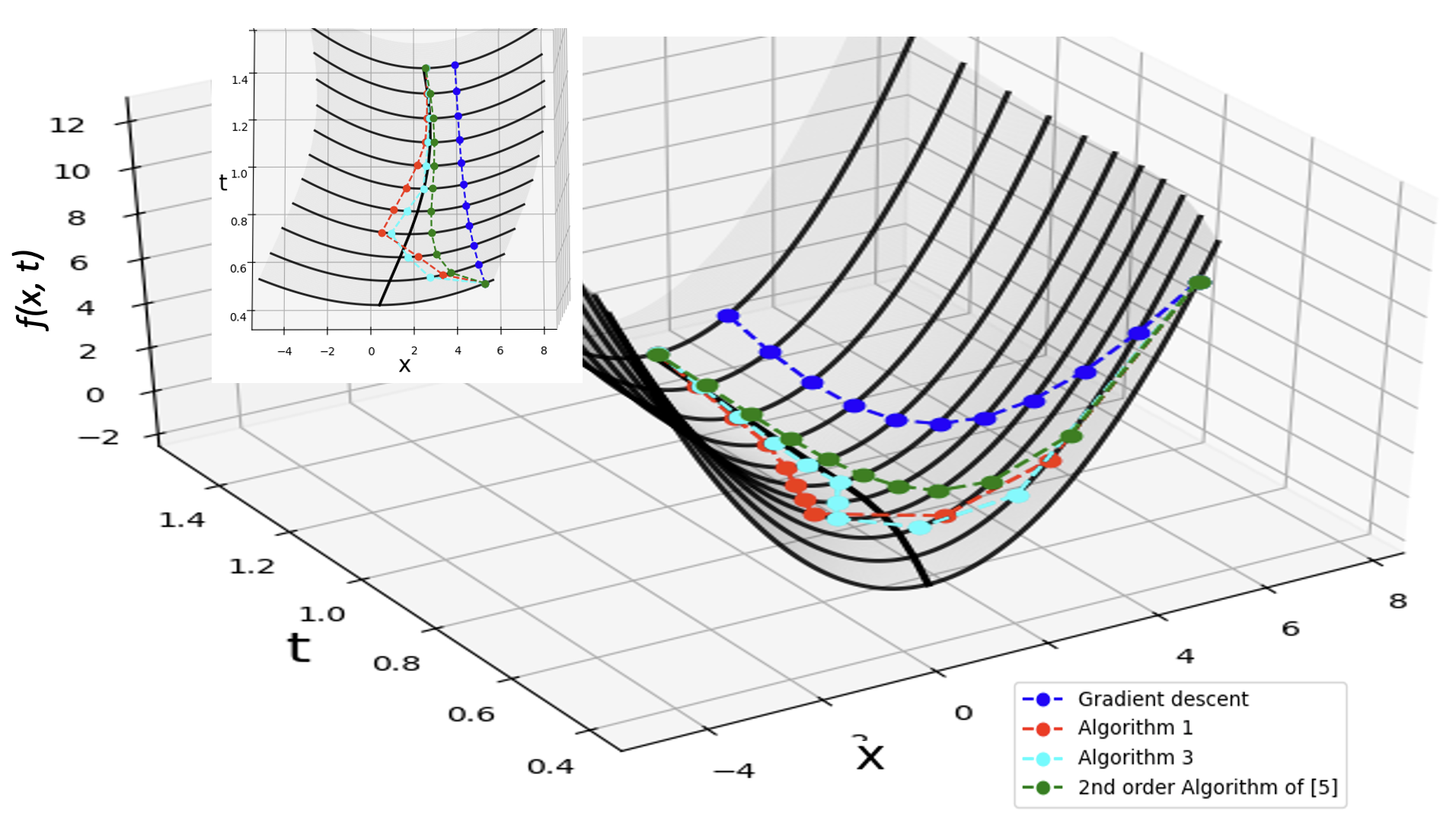}
      \caption{{\small The trajectories of the proposed Algorithms~\ref{Alg::1},~\ref{Alg::3} and the second-order Hessian-based algorithm of~\cite{AS-AM-AK-GL-AR:16} compared to the trajectory generated by the gradient descent algorithm shown in time interval $t\in[0.4, 1.4]$ for the cost function~\eqref{num1_cost}.}}
      \label{fig::ex1-fig1}
\end{figure}

  As depicted in Fig.~\ref{fig::ex1-fig1} and Fig.~\ref{fig::ex1-fig2}, the proposed Algorithms~\ref{Alg::1} and~\ref{Alg::3} show a better tracking behavior than the gradient descent algorithm, specially when $x_k$ is   far from the optimal value. is far from the optimal value. Notably, Algorithm~\ref{Alg::3}, as predicted by Lemma~\ref{lem::Alg3_vs_Alg1}, by incorporating more informative temporal knowledge, i.e., $\nabla_t f$ and $\nabla_{\vect{x}t} f$, achieves a lower tracking error and superior initial performance compared to other algorithms. Figures~\ref{fig::ex1-fig1} and \ref{fig::ex1-fig2} also display the trajectory of the optimal decision variable from the second-order Hessian-based algorithm in \cite{AS-AM-AK-GL-AR:16}, which, as expected, outperforms our proposed algorithms. Moreover, a noteworthy observation depicted in Fig.~\ref{fig::ex1-fig2} is that our first-order algorithms, particularly Algorithm~\ref{Alg::3}, outperform the second-order algorithm by~\cite{AS-AM-AK-GL-AR:16} when the tracking error is significant. Conversely, the second-order algorithm excels when the tracking error is small. This highlights the advantages of our hybrid Algorithm~\ref{Alg::4}: it achieves better tracking and reduces computational cost initially, when the error is large, and then switches to a second-order algorithm for more precise tracking as the error decreases.

 \begin{figure}[t]
  \centering
    \includegraphics[scale=0.24]{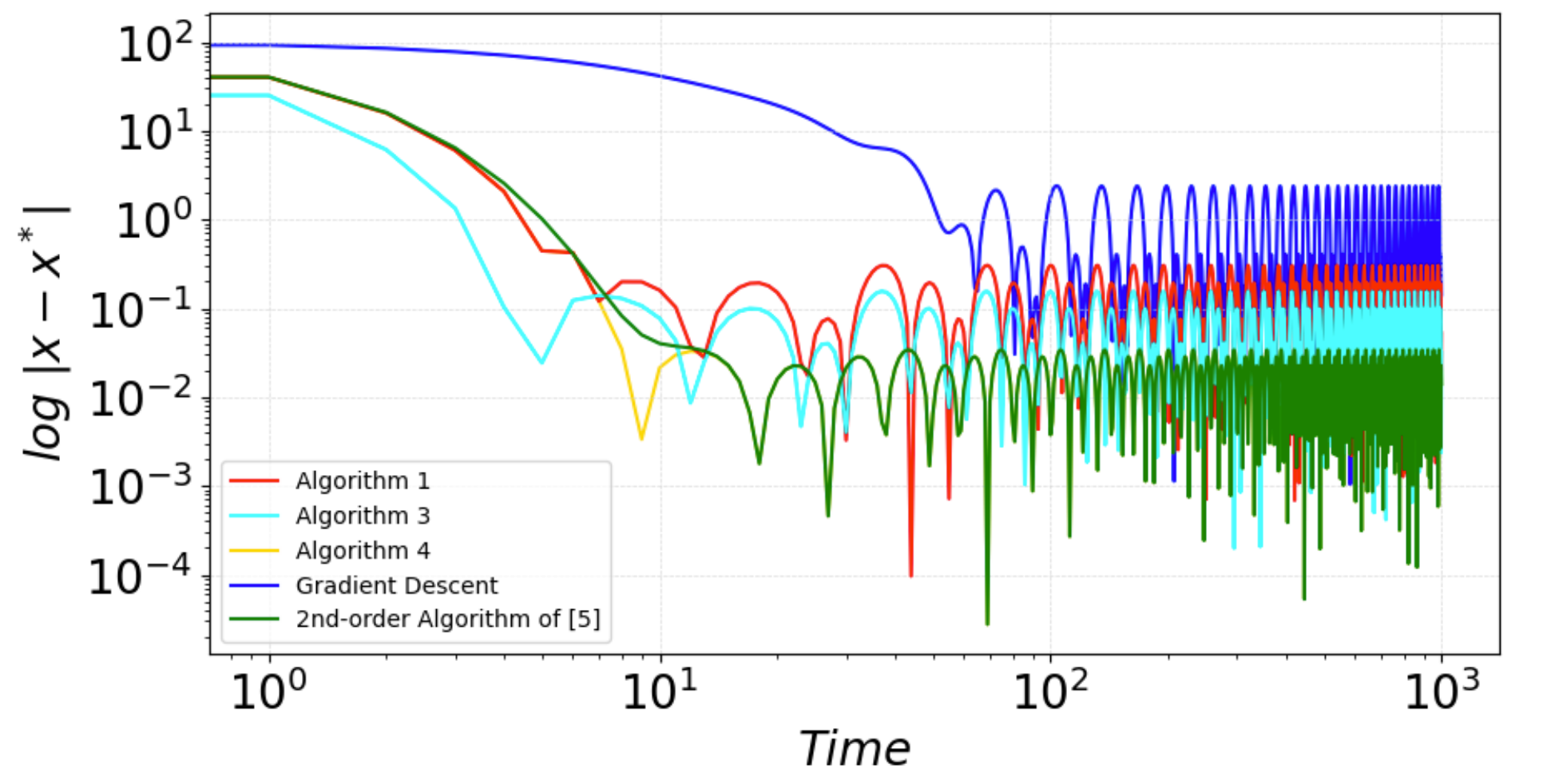}
      \caption{\small {Log error of the performance of Algorithm \ref{Alg::1}, 3 and \ref{Alg::4} versus gradient descent algorithm and the Hessian based  algorithm proposed in \cite{AS-AM-AK-GL-AR:16} with respect to the sampling time~$t_k$.}}
      \label{fig::ex1-fig2}
\end{figure}

\subsection{Model Predictive Control: a case of a time-varying cost whose temporal derivatives are not available explicitly} As a second demonstration, we consider an MPC problem cast as a time-varying optimization. MPC involves solving a sequence of optimization problems on some fixed moving horizon.  These iterative optimizations can be seen as instances of an optimization problem with time-varying costs. Let us demonstrate by designing an MPC controller for a unicycle robot shown in Fig.~\ref{fig:unicycle}. The control objective is to make point  $(x_h, y_h)$ as shown in Fig.~\ref{fig:unicycle}, which could, for example, denote the position of a camera installed on the robot, to trace or adhere to a desired trajectory $r(t) = [\begin{smallmatrix}
        r_x(t)& r_y(t)
\end{smallmatrix}]^\top.$ This desired trajectory is not predetermined; it is constructed or adapted dynamically at each time instance $t$ from perception module of the robot, for example, by observing the robot's path through the camera system within a forward-looking horizon. The dynamics governing the motion of point $(x_h,y_h)$ on the robot is given by (considering the geometry shown in Fig.~\ref{fig:unicycle}). Discretizing this dynamics we obtain $
    \left[\begin{smallmatrix}
        x_h(k+1) \\
        y_h(k+1)
    \end{smallmatrix}\right] = 
    \left[\begin{smallmatrix}
        x_h(k) + \delta u_1\\
        y_h(k) + \delta u_2
    \end{smallmatrix}\right].$ An MPC-based tracking controller can be obtained from solving the following optimization problem at each time step $k$, executing only controller $u(k)$ and repeating the~process:
\begin{subequations}\label{eq::MPC}
\begin{align}
    \min_{u_1\in \real^{10}} J_1(u_1(k)) &=  \sum\nolimits_{i=0}^{H_p-1}(r_x(k+i)-x_h(k+i))^2 \nonumber\\ 
    &+ \frac{1}{\lambda}\sum\nolimits_{i=0}^{H_u-1}(u_1(k+i))^2,\\
    \min_{u_2\in \real^{10}} J_2(u_2(k)) &=  \sum\nolimits_{i=0}^{H_p-1}(r_y(k+i)-y_h(k+i))^2 \nonumber\\
    &+ \frac{1}{\lambda}\sum\nolimits_{i=0}^{H_u-1}(u_2(k+i))^2.
\end{align}
\end{subequations}
  \begin{figure}[t]
  \centering
    \includegraphics[scale=0.22]{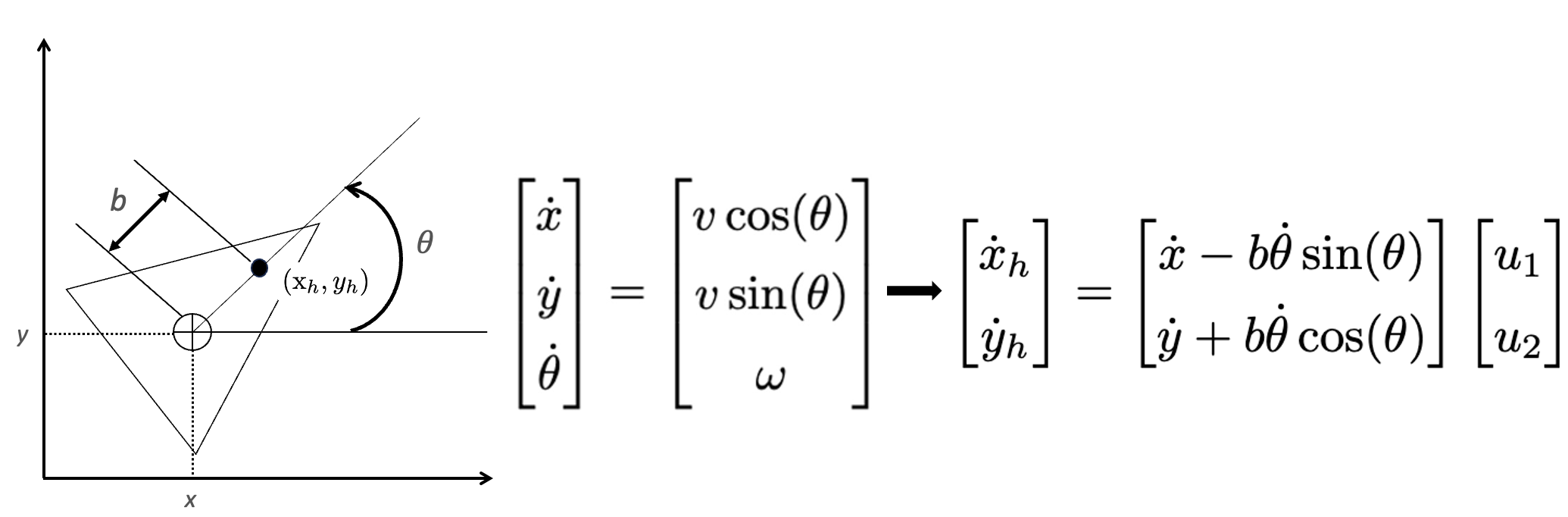}
      \caption{\small A unicycle robot and its equations of motion. }
      \label{fig:unicycle}
\end{figure} 
Here, $\lambda$ is the weight factor which is assumed to be constant on the prediction horizon. For our numerical example, we use the following values $\lambda = 0.1$, and same prediction horizon and the control horizon $H_p = H_u = 10$. Additionally, we initialize the robot from $x_h(0)=y_h(0)=0$ and set $u_1(i) = u_2(i) = 10, ~ \forall i \in \{0, 1,...,9\}$ when $k=0$. In addition, the path is generated by the function $y = \sin(\pi x), ~\forall x \in [-1,1]$.

A primary challenge with MPC control is the computational cost of repeatedly solving optimization problems like~\eqref{eq::MPC}. This often involves iterative methods, such as gradient descent, which converge asymptotically at each time step. Viewing the optimization problem \eqref{eq::MPC} as a time-varying cost at each time $t_k$, we solve this MPC problem using our proposed algorithms with parameters set to $\alpha = 0.5$, $\eps = 0.03$ and $\delta = 0.1$. Due to the lack of explicit knowledge of the cost's time derivative, we cannot use Algorithms~\ref{Alg::1} and \ref{Alg::4}. Thus, we use Algorithm~\ref{Alg::2} and a similar approximate version of Algorithm~\ref{Alg::4}.

The results in Fig.\ref{ex2} demonstrate that Algorithm~\ref{Alg::2} and the approximate version of Algorithm~\ref{Alg::4} achieve better tracking error than the gradient descent algorithm. This means they reach a specific error threshold with fewer steps, reducing computational cost. For instance, both these algorithms reach the 
$\epsilon$-error level at time $11.5$, while the gradient descent algorithm takes until time $33$, requiring $215$ more iterations. In Fig.\ref{ex2}, upon reaching the $\epsilon$ error threshold, Algorithm\ref{Alg::4} transitions to line 5 for the prediction step. However, due to the approximation for gradient values (i.e.,$\nabla_{\vect{x}t}J$ and $\nabla_{t}J$), it cannot completely eliminate the steady-state tracking error but achieves a lower error compared to Algorithm~\ref{Alg::2}. Note that in this problem, 
$n=10$, so a second-order algorithm must compute and invert a $10 \times 10$ Hessian matrix, which can be computationally costly.

\begin{figure}[t]
  \centering
    \includegraphics[scale=0.35]{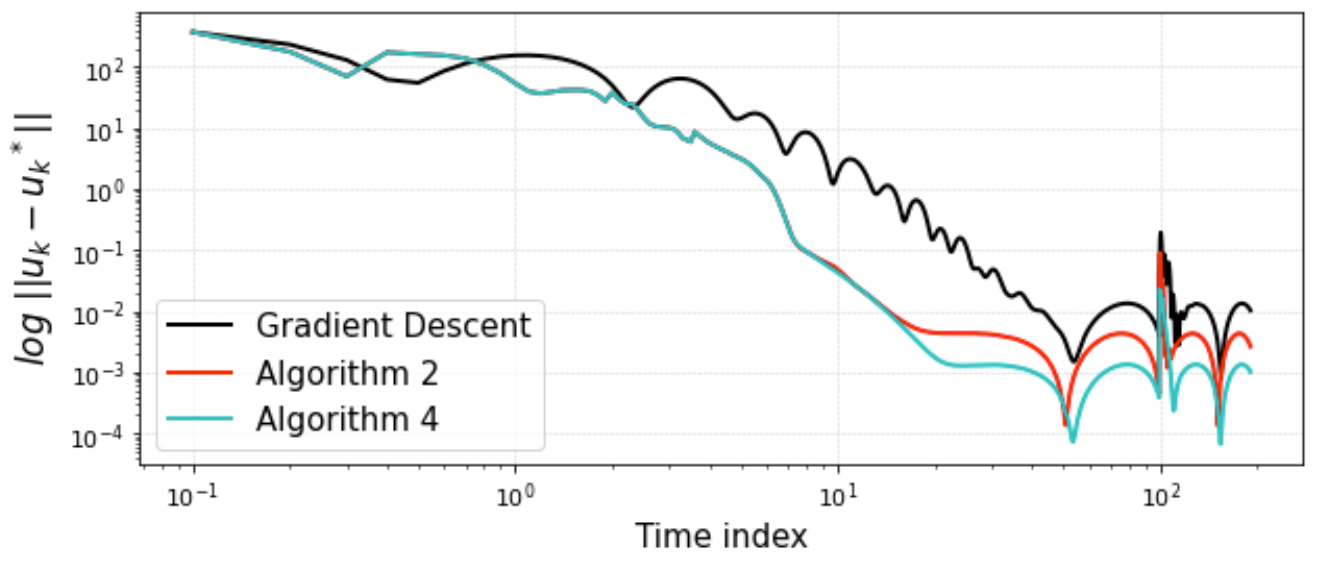}
      \caption{{\small Log error of the controller $u_1$ and $u_2$ with respect to time under executing Algorithm \ref{Alg::2}, approximated version of Algorithm \ref{Alg::4} and gradient descent algorithm. Here $u_k^{\star}$ is the optimal control value which given that the costs are quadratic can be obtained analytically by assuming enough computational power.}}
      \label{ex2}
\end{figure}

\section{Conclusion}\label{sec::conclu}
This paper introduced a set of first-order algorithms to solve a class of convex optimization problems with time-varying cost. We showed that these algorithms were able to follow the optimal minimizer trajectory with a bounded steady-state error. The primary motivation behind the development of these algorithms was to reduce the computational cost per time step from $O(n^3)$, as seen in existing Hessian-based algorithms, to $O(n)$, where $n$ represents the dimension of the decision variable. The algorithms proposed in this paper were inspired by the gradient descent algorithm but incorporate a prediction step, which allowed us to take into account the temporal variation of the cost to make sure that in transitioning from one time step to the other, the expected function reduction property is preserved across the time as well. Our proposed algorithms, which we referred to as $O(n)$-algorithms, were designed to solve the problem for different scenarios based on available temporal information about the cost. We also proposed a hybrid algorithm that switched to a second-order gradient tracking prediction step when the gradient got too close to zero and the $O(n)$ algorithms faced numerical problems due to the use of the inverse of the size of the gradient vector in their prediction step. Our numerical examples confirmed the improved tracking performance that was expected from this hybrid algorithm. Future work will focus on exploring the distributed implementation of the proposed algorithms for large-scale optimization problems in networked systems.

\bibliographystyle{ieeetr}%
\bibliography{bib/alias,bib/Reference}

\section*{Appendix}
In this appendix, we present the proof of our formal results. In these proofs invoke the properties of the cost that are explained in Section~\eqref{sec::prelim} and the implications of those assumptions, which are listed next. When $f$ is twice continuously differentiable, Assumption~\ref{asm:str_convexity} results in  $ m\, \vect{I}_n \preceq\nabla_{\vect{xx}}f(\vect{x},t),~\forall
~\vect{x}\in \real^n$~\cite{YN:18}, and Assumption~\ref{asm:M_Lip} results in $\nabla_{\vect{xx}} f(\vect{x},t)\preceq M\, \vect{I}_n,~\forall
~\vect{x}\in \real^n$~\cite{YN:18}. Under Assumptions~\ref{asm:str_convexity} and \ref{asm:M_Lip}, we also have~\cite{YN:18}
  \begin{subequations}
  \begin{align}
   &   \nabla_{\vect{x}} f(\vect{x},t)\!^\top\!(\vect{y}-\vect{x})+\frac{m}{2}\|\vect{y}-\vect{x}\|^2\leq f(\vect{y},t)-f(\vect{x},t)\nonumber\\
      &~~\qquad\quad\quad \leq \nabla_{\vect{x}} f(\vect{x},t)^\top(\vect{y}-\vect{x})+\frac{M}{2}\|\vect{y}-\vect{x}\|^2,\label{eq::convex_lipsc}\\
 &     \frac{1}{2M}\|\nabla f(\vect{x},t)\|^2\leq f(\vect{x},t)-f^\star(t) \leq \frac{1}{2m}\|\nabla f(\vect{x},t)\|^2,\label{eq::dist_optim_cost}
  \end{align}
  \end{subequations}
at each $t\in\real_{\geq0}$ and  for all $\vect{x},\vect{y}\in\real^n$.

\begin{proof} [Proof of Lemma~\ref{bound_traj_opt}] Under Assumptions~\ref{asm:str_convexity} and~\ref{asm:bound_dfstar}, ~\cite[Theorem 2F.10]{ALD-RTR:09} states that $t\mapsto\vectsf{x}^\star(t)$ satisfy 
\begin{align}\label{eq_bound_opt_traj}
\!\!\!    \|\vectsf{x}^\star_{k+1}\!-\vectsf{x}^\star_{k}\| \!\leq\! \frac{1}{m} \|\nabla_{\vect{x}t}f(\vect{x}_k,t_k)\| (t_{k+1}\!-t_k)\!\leq \!\frac{K_2\delta}{m}.~
\end{align}
Using Taylor series expansion~\eqref{eq::exact_taylor} with $(\vect{\chi}_{k+1},\vect{\chi}_k)=(\vectsf{x}^\star_{k+1},\vectsf{x}^\star_{k})$ and $(\tau_{k+1},\tau_k)=(t_{k+1},t_k)$, the proof follows from moving $f(\vectsf{x}^\star_k,t_k)$ to the left side of the expansion and upper bounding the right-hand side of the new equality using the bounds in Assumptions~\ref{asm:str_convexity} and \ref{asm:bound_dfstar} and the bounds in~\eqref{eq_bound_opt_traj}.
\end{proof}

\begin{proof}[Proof of Theorem~\ref{thm::main1}]
If $\|\nabla_{\vect{x}}f(\vect{x}_k,t_k)\|< \eps$, we have $\vect{x}_{k+1}^-=\vect{x}_k$. Then, given Assumption~\ref{asm:bound_dfstar} it follows from~\eqref{eq::exact_taylor} that 
\begin{align}\label{eq::grad_less_eps}
|f(\vect{x}^-_{k+1},t_{k+1})- f(\vect{x}_{k},t_{k})|\leq \mu\,\delta,\end{align}
where $\mu$ is given in the statement. If $\|\nabla_{\vect{x}}f(\vect{x}_k,t_k)\|\geq \eps$, substituting for $\vect{x}_{k+1}^-$ from Algorithm~\ref{Alg::1} (line $3$) in~\eqref{eq::exact_taylor} gives~\eqref{eq:f_pred}.
Under Assumption~\ref{asm:M_Lip}, Assumption~\ref{asm:bound_dfstar} and applying Cauchy–Schwarz inequality,~\eqref{eq:f_pred} leads to
\begin{align*}
    |f(\vect{x}^-_{k+1},t_{k+1})- &f(\vect{x}_{k},t_{k})| \leq \frac{\delta^2}{2}K_3+ 2\delta K_1 \\
&+\frac{\delta^2 K_1^2 M }{2\|\nabla_{\vect{x}} f(\vect{x}_{k},t_k)\|^2}
+\frac{\delta^2 K_1 K_2}{\| \nabla_{\vect{x}} f(\vect{x}_{k},t_k)\|},
\end{align*}

which along with $1/\|\nabla_{\vect{x}}f(\vect{x}_k,t_k)\|\leq 1/\eps$ results in $|f(\vect{x}^-_{k+1},t_{k+1})- f(\vect{x}_{k},t_{k})|\leq \gamma\delta^2$, where $\gamma$ is as given in the statement. 
Then, together with~\eqref{eq::grad_less_eps}, we can conclude that the prediction steps $2$ to $5$ of Algorithm~\ref{Alg::1} lead to
\begin{align} \label{eq::bound_alg1}
|f(\vect{x}^-_{k+1},t_{k+1})- f(\vect{x}_{k},t_{k})|\leq \max(\gamma\delta^2,\mu\delta).
\end{align}
For the update step, invoking~\eqref{eq::convex_lipsc}, we have 
\begin{align*}
&\nabla_{\vect{x}}  f(\vect{x}^-_{k+1},t_{k+1})^\top(\vect{x}_{k+1}-\vect{x}^-_{k+1})+\frac{m}{2}\|\vect{x}_{k+1}-\vect{x}^-_{k+1}\|^2\\&\leq f(\vect{x}_{k+1},t_{k+1})-f(\vect{x}^-_{k+1},t_{k+1})\leq\\&\nabla_{\vect{x}}  f(\vect{x}^-_{k+1},t_{k+1})^\top(\vect{x}_{k+1}-\vect{x}^-_{k+1})+\frac{M}{2}\|\vect{x}_{k+1}-\vect{x}^-_{k+1}\|^2,
\end{align*}
 which along with update line $7$ of the Algorithm~\ref{Alg::1}~gives
\begin{align*}
  &-\alpha (1-\frac{m}{2}\alpha)\|\nabla_{\vect{x}}  f(\vect{x}^-_{k+1},t_{k+1})\|^2\leq f(\vect{x}_{k+1},t_{k+1})-\\
&\quad f(\vect{x}^-_{k+1},t_{k+1})\leq -\alpha (1-\frac{M}{2}\alpha)\|\nabla_{\vect{x}}   f(\vect{x}^-_{k+1},t_{k+1})\|^2.
\end{align*}
Next, note that given Assumption~\ref{asm:str_convexity} and in light of~\eqref{eq::dist_optim_cost}, we have 
 \begin{align*}
  \!\! -2M (&f(\vect{x}^-_{k+1},t_{k+1})-f^\star(t_{k+1}))\!\leq\!\!-\|\nabla_{\vect{x}}  f(\vect{x}^-_{k+1},t_{k+1})\|^2\\&  \!\leq -2m (f(\vect{x}^-_{k+1},t_{k+1})-f^\star(t_{k+1})).
\end{align*}
Let us write $ f(\vect{x}_{k+1},t_{k+1})-f^\star(t_{k+1})
   \!=\!f(\vect{x}_{k+1},t_{k+1})-f(\vect{x}^-_{k+1},t_{k+1})+(f(\vect{x}^-_{k+1},t_{k+1})-f^\star(t_{k+1})),$
which together with the bounds we already derived for the update step leads to
\begin{align}\label{eq::bound_int}
   & f(\vect{x}_{k+1},t_{k+1})\!-\!f^\star(t_{k+1})\leq\nonumber\\
    &\quad \quad\quad  (1-2\kappa  \alpha m)\,(f(\vect{x}^-_{k+1},t_{k+1})-f^\star(t_{k+1})),
\end{align}
where $\kappa=(1-\alpha M/2)$. Here, given that $0<m\leq M$, we used the fact that for $0<\alpha\leq \frac{1}{2M}$ we have $0<2\alpha m(1-\alpha M/2)<2\alpha M(1-\alpha m/2)\leq 1$.
 On the other hand, by taking into account~\eqref{eq::bound_alg1} and \eqref{eq::bound_optimal_trajectory} we can write
\begin{align*}
&f(\vect{x}^-_{k+1},t_{k+1})-f^{\star}(t_{k+1})=f(\vect{x}^-_{k+1},t_{k+1})-f(\vect{x}_{k},t_{k})\\&+f(\vect{x}_{k},t_{k})-f^{\star}(t_{k})+f^{\star}(t_{k})-f^{\star}(t_{k+1})\leq \\
&\max(\gamma\delta^2,\mu\delta)+f(\vect{x}_{k},t_{k})-f^{\star}(t_{k})+ \psi.
\end{align*}
Then, from~\eqref{eq::bound_int} we can obtain  
 \begin{align*}
&f(\vect{x}_{k+1},t_{k+1})-f^{\star}(t_{k+1}) \leq(1-2\kappa \alpha m )\max(\gamma\delta^2,\mu\delta)
\\&+
(1-2\kappa \alpha m )(f(\vect{x}_{k},t_{k})-f^{\star}(t_{k}))+(1-2\kappa \alpha m )\psi.
\end{align*}
Subsequently, since $0<1-2\kappa \alpha m\leq 1$ we can obtain~\eqref{eq::bound_on_solu_1}, completing the proof.
\end{proof}

\begin{proof}[Proof of Theorem~\ref{thm::main2}]
First note that if $\|\nabla_{\vect{x}}f(\vect{x}_k,t_k)\|< \eps$ Algorithm~\ref{Alg::2} also results in~\eqref{eq::grad_less_eps}. If $\|\nabla_{\vect{x}}f(\vect{x}_k,t_k)\|\geq  \eps$, we proceed as follows. Note that
\begin{align}\label{eq::taylor_proof}
    &f(\vect{x}_k,t_{k-1})=f(\vect{x}_k,t_{k})-\nabla_t f(\vect{x}_k,t_k)(t_{k}-t_{k-1})\nonumber\\&+\frac{1}{2}\nabla_{tt} f(\vect{x}_k,\nu)(t_{k}-t_{k-1})^2,~\nu\in[t_{k-1},t_k).
\end{align}
 Considering~\eqref{eq::taylor_proof}, if $\|\nabla_{\vect{x}}f(\vect{x}_k,t_k)\|\geq  \eps$, the prediction step of Algorithm~\ref{Alg::2} (line 3) results in
\begin{align*}
 &f(\vect{x}^-_{k+1},t_{k+1})- f(\vect{x}_{k},t_{k})=\frac{\delta^2}{2}\nabla_{tt} f(\vect{\vect{\zeta}},\theta) + \delta \nabla_{t}f(\vect{x}_{k},t_k)\\
&- \delta \big|\nabla_{t}f(\vect{x}_{k},t_k) - \frac{\delta}{2}\nabla_{tt} f(\vect{x}_{k},\nu))\big|\\
& +\frac{\delta^2 \big|\nabla_{t}f(\vect{x}_{k},t_k) - \frac{\delta}{2}\nabla_{tt} f(\vect{x}_{k},\nu))\big|^2}{2\|\nabla_{\vect{x}} f(\vect{x}_{k},t_k)\|^4}\\
&\nabla_{\vect{x}} f(\vect{x}_{k},t_k)^\top \nabla_{\vect{xx}}f(\vect{\vect{\zeta}},\theta)\nabla_{\vect{x}} f(\vect{x}_{k},t_k)\\
&-\frac{\delta^2 |\nabla_t f(\vect{x}_{k},t_k)-\frac{\delta}{2}\nabla_{tt} f(\vect{x}_{k},\nu)|}{\| \nabla_{\vect{x}}f(\vect{x}_{k},t_k)\|^2}  \nabla_{\vect{x}t} f (\vect{\vect{\zeta}},\theta)^\top \nabla_{\vect{x}} f(\vect{x}_{k},t_k),
\end{align*}
which given Assumption \ref{asm:M_Lip} and ~\ref{asm:bound_dfstar} and applying Cauchy–Schwarz inequality and using the fact that $|a+b|^2 \leq 2|a|^2 + 2|b|^2$, $\forall$ $a,b \in \real$ we have 
\begin{align*}
&|f(\vect{x}^-_{k+1},t_{k+1})- f(\vect{x}_{k},t_{k})| \leq \delta^2K_3 + 2\delta K_1\\
& +\frac{\delta^2 K_1^2 M}{\|\nabla_{\vect{x}} f(\vect{x}_{k},t_k)\|^2} +\frac{\delta^2 K_2 (K_1 + \frac{\delta}{2}K_3)}{\| \nabla_{\vect{x}}f(\vect{x}_{k},t_k)\|} \\
&+\frac{\delta^4 K_3^2 M}{4\|\nabla_{\vect{x}} f(\vect{x}_{k},t_k)\|^2},
\end{align*}
which together with $1/\|\nabla_{\vect{x}}f(\vect{x}_k,t_k)\|\leq 1/\eps$ results in $|f(\vect{x}^-_{k+1},t_{k+1})- f(\vect{x}_{k},t_{k})|\leq \gamma'\delta^2$.
Thus, given 
Assumption~\ref{asm:bound_dfstar} and considering~\eqref{eq::grad_less_eps}, we can conclude that the prediction step (lines $2$ to $5$) of Algorithm~\ref{Alg::2} lead~to
\begin{align}\label{eq::bound_alg11}
|f(\vect{x}^-_{k+1},t_{k+1})- f(\vect{x}_{k},t_{k})|\leq \max(\gamma'\delta^2,\mu\delta),
\end{align}
where $\gamma'$ is given in the statement. For the update step, we follow the similar approach as the proof Algorithm~\ref{Alg::1} to arrive as same inequality relation~\eqref{eq::bound_int}.
On the other hand, we have $f(\vect{x}^-_{k+1},t_{k+1})-f^{\star}(t_{k+1})=f(\vect{x}^-_{k+1},t_{k+1})-f(\vect{x}_{k},t_{k})+f(\vect{x}_{k},t_{k})-f^{\star}(t_{k})+f^{\star}(t_{k})-f^{\star}(t_{k+1})$, which along with 
invoking~\eqref{eq::bound_alg11}, \eqref{eq::bound_optimal_trajectory} and~\eqref{eq::bound_int} results in
\begin{align*}
&f_2(t_{k+1})-f^{\star}(t_{k+1})\leq(1-2\kappa\alpha m)\max(\gamma'\delta^2,\mu\delta)
\\&+
(1-2\kappa\alpha m)(f(\vect{x}_{k},t_{k})-f^{\star}(t_{k}))+(1-2\kappa\alpha m)\psi,
\end{align*}
and consequently~\eqref{eq::bound_on_solu_2},
which concludes our proof.
\end{proof}

\begin{proof}[Proof of Theorem~\ref{thm::main1::alg3}]
If $\|\nabla_{\vect{x}}f(\vect{x}_k,t_k) + \delta\nabla_{\vect{x}t}  f(\vect{x}_k,t_k)\|\geq \eps$ and 
$\nabla_{\vect{x}t}  f(\vect{x}_k,t_k)^\top \nabla_{\vect{x}}  f(\vect{x}_k,t_k) \leq 0$, then, by adding and subtracting $\nabla_{\vect{x}t} f(\vect{x}_k,t_k) (\vect{x}^-_{k+1}-\vect{x}_{k}) \delta$ and substituting for $\vect{x}_{k+1}^-$ from Algorithm~\ref{Alg::3} (line $3$) in \eqref{eq::exact_taylor} gives \eqref{eq::pred_alg3}. Under Assumption~\ref{asm:M_Lip}, Assumption~\ref{asm:bound_dfstar} and applying Cauchy–Schwarz inequality, and along with $1/\|\nabla_{\vect{x}} f(\vect{x}_{k},t_k) + \delta\nabla_{\vect{x}t}  f(\vect{x}_k,t_k)\|\leq 1/\eps$, ~\eqref{eq::exact_taylor} leads to
\begin{align*}
&|f(\vect{x}^-_{k+1},t_{k+1})- f(\vect{x}_{k},t_{k})|\leq 2\delta K_1 + \frac{\delta^2 K_1^2 M}{2 \eps^2}\\ 
&+ \frac{\delta^2}{2}K_3 + \frac{2\delta^2 K_1 K_2}{\eps},
\end{align*}
which results in $|f(\vect{x}^-_{k+1},t_{k+1})- f(\vect{x}_{k},t_{k})|\leq \eta\delta^2$, where $\eta$ is given in the statement. 
Furthermore, same as the proof in Algorithm \ref{Alg::1}, the other cases of Algorithm \ref{Alg::3} will result in \begin{align}\label{eq::grad_less_eps:alg3}
|f(\vect{x}^-_{k+1},t_{k+1})- f(\vect{x}_{k},t_{k})|\leq \max(\gamma\delta^2,
\mu\delta),
\end{align}
where $\mu$ and $\gamma$ are given in the statement.

Therefore, taking into account~\eqref{eq::grad_less_eps:alg3}, we can conclude that the prediction steps $2$ to $5$ of Algorithm~\ref{Alg::3} lead to
\begin{align}\label{eq::bound_alg3}
|f(\vect{x}^-_{k+1},t_{k+1})- f(\vect{x}_{k},t_{k})|\leq \max(\gamma\delta^2,\mu\delta, \eta\delta^2).
\end{align}
For the update step, similar to the proof for Algorithm \ref{Alg::1}, we obtain  
 \begin{align*}
&f(\vect{x}_{k+1},t_{k+1})-f^{\star}(t_{k+1}) \leq(1-2\kappa \alpha m )\max(\gamma\delta^2,\mu\delta, \eta \delta^2)
\\&+
(1-2\kappa \alpha m )(f(\vect{x}_{k},t_{k})-f^{\star}(t_{k}))+(1-2\kappa \alpha m )\psi.
\end{align*}
Subsequently, since $0<1-2\kappa \alpha m\leq 1$ we can obtain~\eqref{eq::bound_on_solu_5}, completing the proof.
\end{proof}

\begin{proof}[Proof of Theorem~\ref{thm6}]
If $\|\nabla_{\vect{x}}f(\vect{x}_k,t_k)\|\geq \eps$, following the same steps in the proof of Theorem~\ref{thm::main1}, we can show that the implementation of Algorithm~\ref{Alg::4} results in~\eqref{eq::grad_less_eps}. If 
 $\|\nabla_{\vect{x}}f(\vect{x}_k,t_k)\|< \eps$, under Assumption ~\ref{asm:M_Lip}, Assumption~\ref{asm:bound_dfstar}, we obtain
\begin{align}\label{eq::grad_less_eps_6}
|f(\vect{x}^-_{k+1},t_{k+1})- f(\vect{x}_{k},t_{k})|\leq \mu\,\delta,\end{align}
 from using the Taylor series explanation~\eqref{eq::exact_taylor} for $f(\vect{x}^-_{k+1},t_{k+1})$ and substituting for $\vect{x}_{k+1}^-$ from Algorithm~\ref{Alg::4}'s line $5$; similar to the procedure used in the proof of Theorem~\ref{thm::main1}.
The remainder of the proof also follows exactly the same steps as in the proof of Theorem~\ref{thm::main1}, and is not repeated here for the sake of~brevity.
\end{proof}

\end{document}